\documentclass[11pt]{amsart}
\usepackage{geometry} 
\geometry{a4paper} 

\pagestyle{plain}

\usepackage{pgfplots}
\usepackage{psfrag}
\usepackage{amsmath}
\usepackage{calc}
\usepackage{systeme}
\usepackage{amsfonts}
\usepackage{amsthm}
\usepackage{algorithm2e}
\usepackage[applemac]{inputenc}
\usepackage{bbold}
\usepackage[numbers]{natbib}

\numberwithin{equation}{section}

\author{%
  Charles Bertucci $^1$  }

\newtheorem{Theorem}{Theorem}[section]

\newtheorem{Rem}[Theorem]{Remark}
\newtheorem{Def}[Theorem]{Definition}
\newtheorem{Prop}[Theorem]{Proposition}

\newtheorem{Ex}[Theorem]{Example}

\usepackage[foot]{amsaddr}

\newcommand{\be}{\begin{equation}}
\newcommand{\ee}{\end{equation}}
\newcommand{\ba}{\begin{aligned}}
\newcommand{\ea}{\end{aligned}}

\newcommand{\R}{\mathbb{R}}
\newcommand{\T}{\mathbb{T}}

\newcommand{\mptd}{\mathcal{P}(\mathbb{T}^d)}
\newcommand{\mpptd}{\mathcal{P}(\mathcal{P}(\mathbb{T}^d))}


\title{Mean Field Games with incomplete information}
\thanks{$^1$ : CEREMADE, CNRS, Universit\'e Paris Dauphine-PSL, UMR 7534, 75016 Paris, France
}
\date{} 

\begin{document}
\maketitle
\begin{abstract}
This paper is concerned with mean field games in which the players do not know the distribution of the other players. First a case in which the players do not gain information is studied. Results of existence and uniqueness are proved and discussed. Then, a case in which the players observe the payments is investigated. A master equation is derived and partial results of uniqueness are given for this more involved case.
\end{abstract}
\tableofcontents

\section*{Introduction}
This paper is interested in Mean Field Games (MFGs) in which the players do not have a complete information on the distribution of the other players in the state space. Namely they are mostly unable to observe each other and have only an a priori on the initial distribution of players. This type of MFG leads to new mathematical questions which are partially solved here.

MFGs have attracted quite a lot of attention since the seminal work \citep{lasry2007mean,lions2007cours}. They are differential games involving non-atomic agents. MFG arise in a wide variety of modeling context such as economics \citep{krusell1998income,achdou2017income}, financial engineering \citep{carmona2020applications}, epidemiology \citep{hubert2018nash} or telecommunications \citep{bertucci2018transmit}. For a vast majority of the existing literature, it is always assumed that all the players have a complete information on the MFG, i.e. they can observe at any time the state and action of each player. In this paper, cases in which all the information is not available to the players are studied.

Several authors have studied problems in which the players do not know directly their individual state but only have some partial information on it, see for instance \citep{sen2016mean,sen2019mean,firoozi2020epsilon} for detailed studies of such cases. P.-L. Lions studied a MFG in which all the players are learning an unknown parameter of the model. The paper \citep{shmaya} also addresses the question of learning/playing at the same time. These setups are different from the one we study here. The closest work to ours in terms of models is \citep{casgrain2018mean} in which the authors studied a MFG in which the players do not know the controls of the other players, only the effect they have (as a whole) on their objective function. This last work relies mostly on the fact that their model is semi-explicitly solvable. We can also mention the work \citep{bergault} which studies questions of information in a major-minor type setting.
Independently of MFG, transport problems on the set of probability measures over probability measures have attracted attention recently, see for instance \citep{korba,pinzi,pinzi2}.

The rest of the paper is organized as follows. A presentation of the MFG model and a quick discussion on the structure of information in MFG is first. The rest of the paper is divided in two parts which constitute the core of the paper. The first one is concerned with a case in which the players have an incomplete initial information and do not gain any information with time. The second one is devoted to the situation in which the players do not observe the state of the other players but have complete information on all the payments.

\section{The MFG model and the classical structure of information}
\subsection{Presentation of the model}
We present here the framework of the underlying game between the players. The state of each players is a process valued on the $d$ dimensional torus $\mathbb{T}^d$ which evolves according to 
\begin{equation}\label{sde}
dX_t = \alpha_t dt + \sqrt{2\sigma}dW_t,
\end{equation}
where $(W_t)_{t \geq 0}$ is a $d$ dimensional Brownian motion on a standard (fixed) filtered probability space $(\Omega, \mathbb{P}, (\mathcal{F}_t)_{t \geq 0})$. The game lasts a time $T > 0$ and the cost of a player who uses the control $(\alpha_s)_{s \geq 0}$ is given by 
$$
\int_0^Tf(m_s)(X_s) + L(X_s,\alpha_s)ds + U_0(m_T)(X_T),
$$
where $(m_t)_{t \geq 0}$ is the evolution of the measure describing the spatial distribution of players and where $L, f$ and $U_0$ are cost functions on which assumptions are made later on. Players are allowed to choose adapted controls with respect to the $\sigma$-algebra generated by their state process. Clearly the cost paid by the players is unknown to them at the initial time since the evolution of their state is stochastic. We naturally assume that the players are risk neutral and take into account the expected cost they are too face which is, if the evolution $(m_t)_{t \in [0,T]}$ is known,
$$
\mathbb{E}_{\mathbb{P}}\left[\int_0^Tf(m_s)(X_s) + L(X_s,\alpha_s)ds + U_0( m_T)(X_T)\right].
$$
We do not particularly insist on why we make such an assumption, which is wildly common in the literature on stochastic optimal control. Hence, given an anticipation $(m_t)_{t \in [0,T]}$, a player can compute its optimal response by solving the Hamilton-Jacobi-Bellman (HJB) equation
\be\label{hjb1}
\begin{split}
-\partial_t u(t,x) - \sigma \Delta u(t,x) + H(x,\nabla_x u(t,x)) = f(m_t)(x) \text{ in } (0,T)\times \mathbb{T}^d,\\
u(T,x) = U_0(m_T)(x) \text{ in } \mathbb{T}^d,
\end{split}
\ee
where we have introduced the Hamiltonian $H(x,p) := \sup_{\alpha}\{-\alpha\cdot p - L(x,\alpha)\}$. The associated optimal control is given in feedback form by 
$$
\alpha_t = -D_pH(X_t,\nabla_x u(t,X_t)).
$$
On the other hand, given that the players use a strategy of the form $\alpha_t = b(t,X_t)$ for some function $b:[0,T]\times \T^d \to \R^d$, their distribution in the state space evolves according to the Fokker-Planck equation
\be\label{fp1}
\partial_t m_t - \sigma \Delta m_t + \text{div}(bm_t) = 0 \text{ in } (0,T)\times \mathbb{T}^d,
\ee
which is, as usual, understood in the sense of distribution. Hence, given an initial distribution of players $m_0 \in \mptd$, a strategic equilibrium is reached if one can find a solution $(u,m,b)$ of \eqref{hjb1} and \eqref{fp1} together with $b(t,x) = -D_pH(x,\nabla_xu(t,x))$. This is summarized in the system
\begin{equation}\label{mfg}
\begin{aligned}
&-\partial_t u - \nu \Delta u + H(x,\nabla u) = f(m)(x) \text{ in } (0,T)\times \T^d,\\
&\partial_t m - \nu \Delta m - \text{div}\left(D_pH(x,\nabla u)m\right) = 0 \text{ in } (0,T)\times \T^d,\\
&u(T,x) = G(m_T)(x), m|_{t = 0} = m_0 \text{ in } \T^d ,
\end{aligned}
\end{equation}
where the dependence of the unknown $(u,m)$ in $(t,x)$ is omitted to lighten the notation.

\subsection{The structure of information}
In the previous system, if neither the particular form of the second order term or the fact that dependence in $m$ and $\nabla u$ are decoupled are important, a fundamental observation lies in the initial distribution of players $m_0$. This observation is that the knowledge of $m_0$ is equivalent (in terms of induced equilibria) to the knowledge of the whole evolution of the distribution of players $(m_t)_{t \in [0,T]}$. This quite simple fact does not need any particular proof as it only suffices to remark that $m_0$ is the only datum in the previous system of equations. Of course, this is a consequence of the deterministic evolution of $(m_t)_{t \in [0,T]}$, given the strategies of the players. In other words, even if the players do not observe each other during the game, as long as they know $m_0$, the induced equilibria are the same as if they observe the whole trajectory $(m_t)_{t \in [0,T]}$. Because the structure (or set) of equilibria only depends on the initial distribution $m_0$, the following question seems natural : what happens to the structure of equilibria if the players do not know $m_0$ ?

We insist that in several models, it is quite natural that players do not know $m_0$. Indeed, for instance, in the telecommunication model of \citep{bertucci2018transmit}, the state of a player (or device) is the amount of data this device wants to transmit, which is private. The same goes for financial models such as the one of \citep{cardaliaguet2016mean}, where the state of a player (or trader) is her portfolio.

To address the question of incomplete knowledge of $m_0$, some assumptions have to be made on the knowledge at each instant that the players have on the distribution of other players. However, let us state that if the process $(m_t)_{t \geq 0}$ is not known and the players have a prior $(µ_t)_{t \geq 0}$ on it, a risk neutrality assumption shall be made on the players. By prior we mean that instead of anticipating an evolution $(m_t)_{t \geq 0}$ for the distribution of players, the players believe that at any time $t \geq 0$, the distribution of players is unknown and that this uncertainty is described by the measure $µ_t \in \mathcal{P}(\mathcal{P}(\mathbb{T}^d))$. In such a context, the new expected cost of the players is
\begin{equation}\label{costmu}
\mathbb{E}_{\mathbb{P}}\left[\int_0^T\int_{\mptd}f(m)(X_s)µ_s(dm) + L(X_s,\alpha_s)ds + \int_{\mptd}U_0( m)(X_T)µ_T(dm)\right].
\end{equation}

\subsection{Assumptions and notation}
We now present the standing assumptions for the rest of the paper. Before that, let us recall some properties of sets of probability measures. 

Assume that $(E,d)$ is a compact and Polish (i.e. complete separable metric) space. We denote by $\mathcal{M}(E)$ the set of Borel measures on $E$ and by $\mathcal{P}(E)$ the set of Borel probability measures on $E$. The latter can be equipped with the distance $\textbf{d}_1$ defined by
$$
\textbf{d}_1(µ,\nu) := \sup_{\phi} \int_E \phi(x)(µ-\nu)(dx),
$$
where the supremum is taken over Lipschitz functions on $(E,d)$ with a Lipschitz constant of at most $1$. The set $(\mathcal{P}(E),\textbf{d}_1)$ is compact and Polish. In all this paper, $\mathcal{P}(E)$ is always seen as equipped with $\textbf{d}_1$. In particular, $\mpptd$ is a compact set. For a function $\phi : \mptd \to \mathbb{R}$, we note for $m \in \mptd, x \in \mathbb{T}^d$
$$
\nabla_m\phi (m,x) = \lim_{\theta \to 0}\frac{\phi((1-\theta)m + \theta \delta_x)}{\theta},
$$
when it is defined.

The image of a measure $µ \in \mptd$ by a map $T : \T^d \to \T^d$ is denoted by $T_{\#}µ$.

For the rest of the paper, we assume the following
\begin{itemize}
\item The hamiltonian $H$ is smooth(, convex) and globally Lipschitz continuous in $p$, uniformly in $x$.
\item The function $f$ (respectively $U_0$) is continuous from $\mptd$ to $\mathcal{C}^{\alpha}(\mathbb{T}^d)$ (respectively to $\mathcal{C}^{2,\alpha}(\mathbb{T}^d)$) for some $\alpha > 0$.
\end{itemize}

Let us also recall that given a duality product $\langle\cdot,\cdot \rangle$ between two sets $E$ and $E'$, a mapping $F : E' \to E$ is said to be 
\begin{itemize}
\item monotone if for all $x,y \in E'$
$$
\langle F(x)-F(y),x -y\rangle \geq 0.
$$
\item strictly monotone if for all $x,y \in E'$
$$
\langle F(x)-F(y),x -y\rangle = 0 \Rightarrow F(x) = F(y)
$$
\end{itemize}

\section{The blind case}
The model we study in this section is going to be called the blind case. In this situation the players all start with a common belief $µ_0 \in \mathcal{P}(\mathcal{P}(\mathbb{T}^d))$ on the initial distribution of players and they do not gain any information during the game. By this we mean that they only observe their individual state for the whole duration of the game. This common belief can be thought of as a public information. If the players have the anticipation $(\tilde{µ}_t)_{t \geq 0}$ for their belief, from \eqref{costmu}, the optimization problem they have to face is described by the HJB equation
\be\label{HJB}
\begin{aligned}
-\partial_t u - \sigma \Delta u + H(x,\nabla u) &= \int_{\mptd}f(m)(x)\tilde{µ}_t(dm) \text{ in } (0,T) \times \mathbb{T}^d ;\\
u(T,x) &= \int_{\mptd}U_0(m)(x)\tilde{µ}_T(dm) \text{ in } \mathbb{T}^d,
\end{aligned}
\ee
from which they can compute an optimal response given by $\alpha_t = -D_pH(X_t,\nabla_xu(t,X_t))$. Let us insist on the fact that, in this case, the players do not observe the cost they are paying, only their state. Indeed, in this model, the players start with an evolution of an a priori on the distribution of players, and they stick to this a priori during the game. If they were to observe their payments, they would have to take into account this information in their belief, which is described in the next section.

\begin{Rem}
In this model, the assumption that the players are non-atomic is essential. Indeed, we stipulate that they only observe their state, and since they are non-atomic players, this information does not carry any information about the value of the measure $m$ describing the distribution of players. Of course this would not be the case in a standard $N$ player game.
\end{Rem}

\subsection{Evolution of the belief of the players}
It remains to describe the evolution of the belief of the players. Under the anticipation that the strategies of the players is going to be given by $b : [0,T]\times \mathbb{T}^d \to \mathbb{R}^d$, if the initial distribution of players is $m_0$, recall that its evolution can be computed through the Fokker-Planck equation
\be\label{FP}
\partial_t m - \sigma\Delta m + \text{div}(bm) = 0 \text{ in } (0,\infty)\times \mptd,
\ee
with the initial condition
$$
m|_{t = 0} = m_0.
$$
The previous is true whatever the initial condition since $b$ does not depend on $m$. Hence the evolution of the belief of the players is the push forward of the initial belief by this Fokker-Planck equation. To be more precise, denote by $K_t(m_0)$ the solution of \eqref{FP} at time $t$ with initial condition $m_0$. This defines a semi group of operators $(K_t)_{t \geq 0}$. Given an initial belief $µ_0$ (and anticipations $b$), the belief $µ_t$ on the distribution of players at time $t$ is given by
\be\label{defpush}
µ_t = (K_t)_{\#}µ_0.
\ee
Formally, we can characterize the evolution $(µ_t)_{t \geq 0}$ given by \eqref{defpush} with the following continuity equation
\be\label{contpp}
\begin{aligned}
\partial_t µ &+ \nabla_m \cdot\bigg((\sigma\Delta m - \text{div}(mb))µ\bigg) = 0 \text{ in } (0,T)\times \mptd,\\
&µ|_{t = 0} = µ_0.
\end{aligned}
\ee
The operator $\nabla_m\cdot$ is thought of as a divergence operator on $\mptd$ and it can be understood in a dual manner as the following result explains. This continuity equation on $\mptd$ states that the weight that $µ_t$ puts on any element of $\mptd$ is transported along the paths generated by the Fokker-Planck equation \eqref{FP}.

\begin{Prop}\label{weaksol}
Fix $µ_0$ and let $(µ_t)_{t \geq 0}$ be defined by \eqref{defpush} (for a given smooth function $b$). For any smooth\footnote{By smooth we mean that both $\phi$ and $\nabla_m\phi$ are well defined and smooth functions of $(t,x)$.} function $\phi : [0,T]\times \mptd \to \mathbb{R}$ such that $\phi(T) = 0$, the following holds
$$
\begin{aligned}
\int_0^T \int_{\mptd}&\left(-\partial_t \phi(t,m) - \int_{\mathbb{T}^d}(\sigma\Delta_x \nabla_m \phi(t,m,x) + b(t,x)\cdot \nabla_x \nabla_m \phi(t,m,x))m(dx) \right)µ_t(dm) dt\\
& - \int_{\mptd}\phi(0,m)µ_0(dm) = 0.
\end{aligned}
$$
Moreover, it is the unique process to satisfy the previous variational relation.
\end{Prop}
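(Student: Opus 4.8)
The plan is to prove the two assertions separately, basing both on a single chain-rule identity along the Fokker--Planck flow. Introduce the transport operator
\be
\mathcal{L}_t\phi(m) := \int_{\T^d}\big(\sigma\Delta_x\nabla_m\phi(t,m,x) + b(t,x)\cdot\nabla_x\nabla_m\phi(t,m,x)\big)m(dx),
\ee
so that the integrand in the statement is exactly $-\partial_t\phi - \mathcal{L}_t\phi$. The starting point for the first (existence of a weak solution) part is the defining property of the push-forward: for every $\psi:\mptd\to\R$ one has $\int_{\mptd}\psi(m)\mu_t(dm) = \int_{\mptd}\psi(K_t(m_0))\mu_0(dm_0)$. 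Applying this to $\phi(t,\cdot)$ turns integrals against the moving measure $\mu_t$ into integrals against the fixed $\mu_0$ of the quantity $\phi(t,K_t(m_0))$, so that I may differentiate in $t$ under the $\mu_0$-integral and reduce the whole identity to establishing, for each fixed $m_0$, the chain rule
\be
\tfrac{d}{dt}\phi\big(t,K_t(m_0)\big) = \big(\partial_t\phi + \mathcal{L}_t\phi\big)\big(t,K_t(m_0)\big),
\ee
followed by integration in $t$ over $[0,T]$ and in $m_0$ against $\mu_0$.

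To obtain this chain rule I would use that $t\mapsto K_t(m_0)$ solves \eqref{FP}, so its velocity $\partial_t K_t(m_0) = \sigma\Delta K_t(m_0) - \mathrm{div}(b\,K_t(m_0))$ is a signed measure of \emph{zero total mass}. Differentiating the composition gives a term $\int_{\T^d}\frac{\delta\phi}{\delta m}(t,K_t(m_0),x)\,\partial_t K_t(m_0)(dx)$; since $\partial_t K_t(m_0)$ has zero mass, the $x$-independent constant distinguishing the flat derivative $\frac{\delta\phi}{\delta m}$ from the intrinsic derivative $\nabla_m\phi$ integrates to zero, and $\frac{\delta\phi}{\delta m}$ may be replaced by $\nabla_m\phi$. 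Substituting the Fokker--Planck right-hand side and integrating by parts twice in $x$ — once to move the Laplacian and once to move the divergence onto $\nabla_m\phi$ — produces precisely $\mathcal{L}_t\phi$, proving the chain rule. Integrating it over $[0,T]$, using $\phi(T,\cdot)=0$ and $K_0(m_0)=m_0$, integrating against $\mu_0$ and returning to $\mu_t$ through the push-forward identity yields the claimed variational relation.

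For uniqueness I argue by duality. If $(\mu_t)$ and $(\nu_t)$ both satisfy the relation with the same $\mu_0$, their difference $\lambda_t := \mu_t - \nu_t$ satisfies $\int_0^T\int_{\mptd}\big(-\partial_t\phi - \mathcal{L}_t\phi\big)(t,m)\,\lambda_t(dm)\,dt = 0$ for every admissible test function with $\phi(T,\cdot)=0$. It therefore suffices to solve, for an arbitrary smooth $\psi:[0,T]\times\mptd\to\R$, the backward equation $-\partial_t\phi - \mathcal{L}_t\phi = \psi$ on $(0,T)\times\mptd$ with terminal condition $\phi(T,\cdot)=0$: this forces $\int_0^T\int_{\mptd}\psi\,\lambda_t\,dt = 0$ for all $\psi$, hence $\lambda_t\equiv 0$. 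Writing $\Theta_{t,s}(m)$ for the value at time $s$ of the solution of \eqref{FP} issued from $m$ at time $t$, the chain rule of the previous step identifies $\mathcal{L}$ as the generator of this flow, so the solution is explicit,
\be
\phi(t,m) = \int_t^T \psi\big(s,\Theta_{t,s}(m)\big)\,ds,
\ee
as one checks by differentiating $s\mapsto\phi(s,\Theta_{t,s}(m))$.

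The main obstacle is to make this dual construction rigorous, namely to verify that the $\phi$ above is an \emph{admissible} test function in the sense required by the statement — that $\phi$ and $\nabla_m\phi$ are well defined and smooth, with $\nabla_m\phi$ in particular $\mathcal{C}^2$ in $x$ so that $\mathcal{L}_t\phi$ makes sense. This reduces to the differentiability and smooth dependence in $m$ of the Fokker--Planck flow $m\mapsto\Theta_{t,s}(m)$, which I would obtain by linearizing \eqref{FP}: the derivative solves the corresponding linearized Fokker--Planck equation, and the parabolic smoothing coming from the $\sigma\Delta$ term, together with the smoothness of the given field $b$, supplies the required spatial regularity for $s>t$, the only delicate point being its integrability as $s\downarrow t$. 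This same regularity theory for the flow is what justifies, in the existence part, the differentiation under the $\mu_0$-integral and the two integrations by parts, which on $\mptd$ are otherwise only formal.
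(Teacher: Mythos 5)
Your proposal is correct and follows essentially the same route as the paper's proof: the existence part via the push-forward identity and the chain rule $\frac{d}{dt}\phi(t,K_t m_0)=(\partial_t\phi+\mathcal{L}_t\phi)(t,K_t m_0)$ along the Fokker--Planck flow, and uniqueness via the explicitly solved dual backward equation, your $\phi(t,m)=\int_t^T\psi(s,\Theta_{t,s}(m))\,ds$ being exactly the paper's $\int_t^T G(s,K_{s-t}m)\,ds$ (your two-parameter flow notation is in fact the more careful one for a time-dependent drift $b$). Your added attention to the zero-mass correction distinguishing the flat derivative from $\nabla_m\phi$ and to the admissibility (smoothness in $m$) of the constructed test function only makes explicit points the paper leaves implicit.
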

\begin{proof}
It suffices to compute for any $t \geq 0$
\be\label{eqproof2}
\begin{aligned}
\int_{\mptd}\phi(t,m)µ_t(dm) &= \int_{\mptd}\phi(t,m)(K_t)_{\#}µ_0(dm)\\
& = \int_{\mptd}\phi(t,K_tm)µ_0(dm)
\end{aligned}
\ee
and to remark that for any $t \geq 0, m \in \mptd$
\be\label{eqproof1}
\ba
\frac{d}{dt} \phi(t,K_tm) &= \partial_t \phi(t,K_tm) + \int_{\mathbb{T}^d}\left(\sigma \Delta K_t m - \text{div}(bK_tm)\right)\nabla_m\phi(t,K_tm,x)dx\\
&=  \partial_t \phi(t,K_tm) + \int_{\mathbb{T}^d}\left(\sigma \Delta \nabla_m\phi(t,K_tm,x) + b(t,x)\cdot\nabla_x \nabla_m\phi(t,K_tm,x)\right)(K_tm)(dx)
\ea
\ee
Because $\phi$ is smooth, namely its derivative with respect to $m$ is smooth in $x$, the previous integral is well defined. Integrating \eqref{eqproof1} between $0$ and $T$ and using \eqref{eqproof2} gives the first part of the result.

The second part is obtained by taking two such processes and by considering their difference $\nu$. By construction, this difference satisfies for any $\phi$ as in the statement 
\be\label{eqnu1}
0 =\int_0^T \int_{\mptd}\left(-\partial_t \phi(t,m) - \int_{\mathbb{T}^d}(\sigma\Delta_x\nabla_m\phi(t,m,x) + \nabla_x\nabla_m\phi(t,m,x)\cdot b(t,x)) m(dx) \right)\nu_t(dm) dt.
\ee
Take any smooth function $G : [0,T]\times \mptd \to \mathbb{R}$ and define 
$$
\phi(t,m) := \int_t^TG(s,K_{s-t}m)ds.
$$
Now observe that for all $m \in \mptd$, $\phi(T,m) = 0$. Moreover, since $G$ is smooth, differentiating the previous equation with respect to $t$ yields that for all $t\in(0,T), m \in \mptd$,
$$
-\partial_t \phi(t,m) - \int_{\mathbb{T}^d}(\sigma\Delta_x\nabla_m\phi(t,m,x) + \nabla_x\nabla_m\phi(t,m,x)\cdot b(t,x)) m(dx) = G(t,m).
$$
Hence, plugging $\phi$ in \eqref{eqnu1} yields
$$
\int_0^T \int_{\mptd}G(t,m)\nu_t(dm)dt = 0.
$$
Hence $\nu = 0$ by density of smooth functions in $\mathcal{C}(\mptd)$. We refer to \citep{lions2007cours,cardaliaguet2010notes} for the density of smooth functions in $\mptd$.
\end{proof}
\begin{Rem}
The question of existence of such a path $(µ_t)_{t \geq 0}$ has already been answered since it has been constructed above.
\end{Rem}
Moreover, we can establish an estimate on the evolution of the belief with respect to time, given that $b \in L^{\infty}$.
\begin{Prop}\label{prop:cont}
Assume that the drift $b$ of \eqref{FP} is bounded. Then $(µ_t)_{t \in [0,T]}$ defined in \eqref{defpush} is uniformly $\frac12$-H\"older continuous, with a constant depending only on $\|b\|_{\infty}$ and $T$.
\end{Prop}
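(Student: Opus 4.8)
The plan is to exploit the pushforward structure \eqref{defpush} to transfer the problem from the belief space $\mpptd$ down to the base space $\mptd$, and then to establish a time-regularity estimate for the Fokker--Planck flow $t \mapsto K_t m_0$ that is \emph{uniform} in the initial datum $m_0$. First I would carry out the reduction. Let $0 \le s \le t \le T$ and let $\Phi : \mptd \to \R$ be $1$-Lipschitz for the ground metric $\textbf{d}_1$ on $\mptd$. Using \eqref{defpush} and the definition of the image measure,
\be
\ba
\int_{\mptd} \Phi \, d�_t - \int_{\mptd} \Phi \, d�_s
&= \int_{\mptd}\big( \Phi(K_t m_0) - \Phi(K_s m_0)\big)\,�_0(dm_0) \\
&\le \int_{\mptd} \textbf{d}_1\big(K_s m_0, K_t m_0\big)\,�_0(dm_0).
\ea
\ee
Taking the supremum over such $\Phi$, and recalling that $\mpptd$ is equipped with the distance $\textbf{d}_1$ built on the ground metric $\textbf{d}_1$ of $\mptd$, yields
\be
\textbf{d}_1(�_s,�_t) \le \sup_{m_0 \in \mptd} \textbf{d}_1\big(K_s m_0, K_t m_0\big).
\ee
So it suffices to bound $\textbf{d}_1(K_s m_0, K_t m_0)$ by $C\sqrt{t-s}$ with $C$ independent of $m_0$.

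For this second and main step I would use the probabilistic representation of \eqref{FP}: writing $m_\tau = K_\tau m_0$, the curve $(m_\tau)_\tau$ is the law of the diffusion $dX_\tau = b(\tau,X_\tau)\,d\tau + \sqrt{2\sigma}\,dW_\tau$ started from $X_0 \sim m_0$. For $\phi$ $1$-Lipschitz on $\T^d$,
\be
\int_{\T^d}\phi \, dm_t - \int_{\T^d}\phi \, dm_s = \mathbb{E}\big[\phi(X_t) - \phi(X_s)\big] \le \mathbb{E}\,|X_t - X_s|,
\ee
and since $X_t - X_s = \int_s^t b(\tau,X_\tau)\,d\tau + \sqrt{2\sigma}(W_t - W_s)$,
\be
\mathbb{E}\,|X_t - X_s| \le \|b\|_\infty (t-s) + \sqrt{2\sigma}\,\mathbb{E}\,|W_t - W_s| \le \|b\|_\infty (t-s) + C_d \sqrt{2\sigma}\,\sqrt{t-s}.
\ee
Taking the supremum over $\phi$ gives $\textbf{d}_1(m_s,m_t) \le \|b\|_\infty (t-s) + C_d\sqrt{2\sigma}\,\sqrt{t-s}$, and bounding $t-s \le \sqrt{T}\sqrt{t-s}$ turns this into a $\tfrac12$-H\"older bound with constant $\|b\|_\infty \sqrt T + C_d\sqrt{2\sigma}$ depending only on $\|b\|_\infty$ and $T$ (the model parameters $\sigma,d$ being fixed). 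This constant does not depend on $m_0$, which is precisely what the reduction requires.

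The $\tfrac12$ exponent, rather than Lipschitz-in-time, is forced by the second-order term, and this is where the only genuine difficulty lies. A purely analytic argument cannot simply differentiate $\int \phi \, dm_\tau$ against a $1$-Lipschitz $\phi$, since $\sigma\Delta\phi$ is then uncontrolled; one must either pass through the stochastic representation above or use the Duhamel formula $m_t = e^{(t-s)\sigma\Delta}m_s - \int_s^t e^{(t-r)\sigma\Delta}\mathrm{div}(b\,m_r)\,dr$, estimating the diffusive part via $\|e^{\tau\sigma\Delta}\phi - \phi\|_\infty \le C\sqrt{\sigma\tau}$ for $1$-Lipschitz $\phi$. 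In either route the Brownian / heat-kernel scaling $\sqrt{t-s}$ produces the H\"older exponent, while the bounded drift contributes the lower-order $O(t-s)$ term.
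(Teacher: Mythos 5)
Your proof is correct and follows essentially the same route as the paper: both arguments use the pushforward structure \eqref{defpush} and $1$-Lipschitz test functions to reduce the claim to a $\tfrac12$-H\"older-in-time estimate for $t \mapsto K_t m_0$ that is uniform in the initial datum $m_0$. The only difference is that the paper invokes this last estimate as a ``classical $\tfrac12$-H\"older continuity estimate on the Fokker--Planck equation'' without proof, whereas you supply it explicitly via the SDE representation $dX_\tau = b(\tau,X_\tau)\,d\tau + \sqrt{2\sigma}\,dW_\tau$ (with the correct scaling $\|b\|_\infty(t-s) + C\sqrt{\sigma(t-s)}$), which fills in the cited fact but is not a different argument.
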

\begin{proof}
By definition, for $s,t \in [0,T]$
$$
\begin{aligned}
\textbf{d}_1(µ_s,µ_t) :&= \sup_{\|\phi\|_{Lip} \leq 1} \int_{\mptd}\phi d(µ_s-µ_t)\\
& =  \sup_{\|\phi\|_{Lip} \leq 1}\int_{\mptd} \phi(K_s m) - \phi(K_t m) µ_0(dm)\\
& \leq \int_{\mptd}\textbf{d}_1(K_s m,K_t m)µ_0(dm)\\
& \leq C_0 \sqrt{|t-s|},
\end{aligned}
$$
from classical $\frac12$-H\"older continuity estimate on the Fokker-Planck equation, see for instance \citep{cardaliaguet2010notes}.
\end{proof}

\begin{Ex}
To illustrate the previous evolution of the belief, consider the case in which $µ_0$ is a combination of Dirac masses. If it is given by
$
µ_0 := n^{-1}\sum_{i =1}^n \delta_{m_i},
$
then for any $t\geq 0$, $µ_t$ is simply given by
$
µ_t := n^{-1}\sum_{i=1}^n \delta_{K_tm_i}.
$
\end{Ex}

\subsection{Existence of Nash equilibria of the game}
In the same way as \eqref{mfg} characterizes Nash equilibria of the MFG when the initial distribution of players is known, we can characterize Nash equilibria of the MFG with incomplete information with a system of PDE. Indeed, given an initial belief $µ_0$, and a profile of strategy $b : [0,T] \times \mathbb{T}^d \to \mathbb{R}^d$ for the players, one can compute the associated evolution of the belief with \eqref{contpp}. Hence, a best response is derived through the HJB equation \eqref{HJB} whose solution is $u$. And in the end, one indeed gets a Nash equilibrium if $b (t,x) = -D_pH(x,\nabla_x u(t,x))$.

Thus Nash equilibria of the blind game with initial distribution $µ_0$ are characterized as solutions of
\be\label{mfgblind}
\begin{aligned}
-\partial_t u& - \sigma \Delta u + H(x,\nabla u) = \int_{\mptd}f(m)(x)µ_t(dm) \text{ in } [0,T] \times \mathbb{T}^d ;\\
\partial_t µ& + \nabla_m \cdot\bigg(\big(\sigma\Delta m + \text{div}(mD_pH(\cdot,\nabla_x u))\big)µ\bigg) = 0 \text{ in } (0,T)\times \mptd,\\
u(T,x) = &\int_{\mptd}U_0(m)(x)µ_T(dm) \text{ in } \mathbb{T}^d, \quadµ|_{t = 0} = µ_0.
\end{aligned}
\ee
To lighten the notation, we introduce $\tilde{f} : µ \in \mpptd \to (x \to \int_{\mptd}f(m)(x)µ(dm) )\in \mathcal{C}^{\alpha}(\mathbb{T}^d)$ and analogously $\tilde{U}_0 : µ \to (x \to \int_{\mptd}U_0(m)(x)µ(dm)) \in \mathcal{C}^{2,\alpha}(\T^d)$. Let us observe that those two functions are linear in $µ$.
\begin{Rem}
If we assumed that the agents are not risk neutral, but take into account their belief in a different way, then functions $\tilde{f}$ and $\tilde{U}_0$ which are not necessary linear could be obtained. For smooth functions this generalization is not difficult and the reader can check that the linearity of these functions does not play a role in the following result of existence.
\end{Rem}

We can establish the following result.
\begin{Theorem}
There exists a solution $(u,µ)$ of \eqref{mfgblind} in the sense that $u$ is a classical solution of the Hamilton-Jacobi-Bellman equation and $µ$ is the unique solution of the continuity equation in the sense of Proposition \ref{weaksol}.
\end{Theorem}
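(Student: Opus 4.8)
The plan is to obtain a solution by a Schauder fixed-point argument carried out on the belief curve. Let $C_0$ be the constant given by Proposition \ref{prop:cont} for drifts bounded by $\|D_pH\|_{\infty}$ (finite since $H$ is globally Lipschitz in $p$), and let $\mathcal{K}\subset C([0,T];\mpptd)$ be the set of curves $(\mu_t)_{t\in[0,T]}$ which start at the given $\mu_0$ and are $\tfrac12$-H\"older continuous with constant $C_0$. Since $\mpptd$ is convex and compact and $\textbf{d}_1$ is convex, $\mathcal{K}$ is convex and, by the Arzel\`a--Ascoli theorem, compact in the Banach space $C([0,T];\mmmtd)$. I would define the map $\Phi:\mathcal{K}\to\mathcal{K}$ as follows: given $(\mu_t)\in\mathcal{K}$, solve \eqref{HJB} with source $\tilde f(\mu_t)$ and terminal datum $\tilde U_0(\mu_T)$ to obtain $u$, set $b:=-D_pH(\cdot,\nabla_x u)$, and let $\Phi((\mu_t)):=\big((K_t)_{\#}\mu_0\big)_{t\in[0,T]}$ be the pushforward \eqref{defpush} associated with this $b$. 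A fixed point of $\Phi$ is precisely a solution of \eqref{mfgblind}: $u$ solves the HJB equation classically and, by Proposition \ref{weaksol}, $\mu=(K_t)_{\#}\mu_0$ is the unique weak solution of the continuity equation driven by $b$.

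First I would check that $\Phi$ is well defined and valued in $\mathcal{K}$. Because $f$ and $U_0$ are continuous from the compact set $\mptd$ into $\mathcal{C}^{\alpha}$ and $\mathcal{C}^{2,\alpha}$ respectively, the data $\tilde f(\mu_t)$ and $\tilde U_0(\mu_T)$ are bounded in these spaces uniformly in $t$ and in $(\mu_t)\in\mathcal{K}$. The comparison principle together with parabolic Schauder theory then yields a unique classical solution $u$ with a bound on $\|\nabla_x u\|_{\infty}$ depending only on these data; this is where the global Lipschitz character of $H$ in $p$ is convenient, as it controls $H(x,\nabla u)$ once a gradient bound is available. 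Consequently $b=-D_pH(\cdot,\nabla_x u)$ satisfies $\|b\|_{\infty}\le\|D_pH\|_{\infty}$, and Proposition \ref{prop:cont} shows that $\Phi((\mu_t))$ is $\tfrac12$-H\"older with the fixed constant $C_0$ and starts at $\mu_0$ (as $K_0=\mathrm{Id}$); hence $\Phi(\mathcal{K})\subseteq\mathcal{K}$.

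The step I expect to be the main obstacle is the continuity of $\Phi$. Suppose $(\mu^n_t)\to(\mu_t)$ uniformly in $\mathcal{K}$. Uniform continuity of the linear maps $\tilde f,\tilde U_0$ on the compact set $\mpptd$ first gives $\tilde f(\mu^n_t)\to\tilde f(\mu_t)$ and $\tilde U_0(\mu^n_T)\to\tilde U_0(\mu_T)$ uniformly in $t$. The delicate point is to transfer this to the gradients of the associated HJB solutions: relying on the uniform $\mathcal{C}^{2+\alpha}$ bounds on the $u^n$ coming from Schauder estimates, a subsequence converges in $\mathcal{C}^{1,2}$ to a solution of \eqref{HJB} with the limiting data, and uniqueness of that solution forces the whole sequence to converge, so that $\nabla_x u^n\to\nabla_x u$ uniformly and hence $b^n\to b$ uniformly. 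The second delicate point is the stability of the Fokker--Planck flow under perturbation of the drift: as $b^n\to b$ uniformly one has $\textbf{d}_1(K^{b^n}_t m,K^{b}_t m)\to0$ uniformly in $t$ and $m$, so that, testing against $1$-Lipschitz functions,
\be
\textbf{d}_1\big((K^{b^n}_t)_{\#}\mu_0,(K^{b}_t)_{\#}\mu_0\big)\ \le\ \int_{\mptd}\textbf{d}_1\big(K^{b^n}_t m,K^{b}_t m\big)\,\mu_0(dm)\ \xrightarrow[n\to\infty]{}\ 0
\ee
uniformly in $t$, which is exactly the continuity of $\Phi$.

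Once these two stability estimates are established, $\Phi$ is a continuous self-map of the convex compact set $\mathcal{K}$, and Schauder's fixed-point theorem provides a fixed point, which is the desired solution. The crux of the whole argument is thus the combined stability estimate: transferring convergence of the belief to convergence of the optimal drift through the HJB equation, and then back to convergence of the pushed-forward belief through the Fokker--Planck flow. All the bounds involved are uniform precisely because the global Lipschitz continuity of $H$ in $p$ keeps $\|b\|_{\infty}$ controlled independently of the belief.
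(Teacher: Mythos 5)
Your proposal is correct and takes essentially the same route as the paper's proof: a Schauder fixed-point argument on belief curves, composing the HJB solution map with the Fokker--Planck push-forward of $\mu_0$, with compactness supplied by the $\tfrac12$-H\"older estimate of Proposition \ref{prop:cont} and continuity by the stability of the HJB and Fokker--Planck equations with respect to their data. The only cosmetic difference is that you work with a self-map of the convex compact set $\mathcal{K}$ of H\"older curves rather than invoking Schauder for a compact continuous map on the whole space, and you spell out the continuity estimates (uniform Schauder bounds plus uniqueness for the HJB part, drift stability for the Fokker--Planck part) that the paper leaves implicit.
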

\begin{proof}
Let us consider the applications $\psi_1, \psi_2$ and $\psi_3$ defined by : $\psi_1 : \mathcal{C}([0,T],\mpptd) \to \mathcal{C}^{1,2}([0,T]\times\mathbb{T}^d)$ associates to  $(µ_t)_{t \geq 0}$ the solution of the HJB equation \eqref{HJB} ; $\psi_2 :  \mathcal{C}^{1,2}([0,T]\times\mathbb{T}^d) \to \mathcal{C}([0,T],\mpptd)$ associates to a function $u$ the solution of \eqref{contpp} with initial condition $µ_0$ and drift $b = -D_pH(\nabla_x u)$ ; and $\psi_3 := \psi_2\circ \psi_1$.

To prove that $\psi_3$ has a fixed point, using Schauder's fixed point Theorem, it is sufficient to establish that $\psi_2$ is a compact and continuous mapping as $\psi_1$ is clearly continuous, thanks to the stability of the HJB equation that arises from the comparison principle, which is true in much more general setting, see for instance \citep{crandall1992user}.\\

The fact that $\psi_2$ is compact is a direct application of Proposition \ref{prop:cont}. The fact that it is continuous is a consequence of the continuity of the Fokker-Planck \eqref{FP} equation with respect to the drift term $b$ and on the definition of $\psi_3$ using \eqref{defpush}.
\end{proof}
\begin{Rem}
Even though the minimal regularity of the previous system is not the main concern of this paper, one can immediately check that the only regularity needed on $u$ is $W^{1,2,\infty}$ and not the $\mathcal{C}^{1,2,\alpha}$ regularity which is here a consequence of the regularity of $f$ and $U_0$. Hence these assumptions on $f$ and $U_0$ can be easily weakened.
\end{Rem}

The previous result of existence of such equilibria is in no sense surprising and falls in a category of somehow classical result of existence of Nash equilibria in MFG. A more interesting question is the effects the lack of knowledge has on the uniqueness properties of the equilibria, which we now address.

\subsection{Uniqueness of Nash equilibria}
Let us first compute the usual proof of uniqueness of MFG which dates back to the original paper of Lasry and Lions \citep{lasry2007mean}. Take two solutions $(u_1,µ_1)$ and $(u_2,µ_2)$ of the system \eqref{mfgblind}. Take the difference of the left hand sides of the HJB equations, integrate against an arbitrary measure $m \in \mptd$ and then integrate once again against the difference $µ_1 - µ_2$, doing this, we obtain using the equations satisfied by $µ_1$ and $µ_2$
\be\label{computeblind}
\begin{aligned}
\int_0^T& \int_{\mptd}\int_{\mathbb{T}^d}((-\partial_t- \sigma \Delta)(u_1-u_2) + H(x,\nabla u_1) - H(x,\nabla u_2))m(dx)(µ_1(t) - µ_2(t))(dm)dt\\
=& \int_0^T\int_{\mptd}\int_{\mathbb{T}^d}\left(\nabla_x (u_1 - u_2)\cdot D_pH(x,\nabla u_1) + H(x,\nabla u_1)- H(x, \nabla u_2)\right)m(dx)µ_1(t)(dm)dt\\
&+\int_0^T\int_{\mptd}\int_{\mathbb{T}^d}\left(\nabla_x (u_2 - u_1)\cdot D_pH(x,\nabla u_2) + H(x,\nabla u_2)- H(x, \nabla u_1)\right)m(dx)µ_2(t)(dm)dt\\
& - \int_{\mptd}\int_{\mathbb{T}^d}\tilde{U}_0(µ_1(T)- µ_2(T))(x)m(dx)(µ_1(T) - µ_2(T))(dm).
\end{aligned}
\ee
Using the convexity of the Hamiltonian, and the HJB equations, we obtain
$$
\begin{aligned}
\int_{\mptd}&\int_{\mathbb{T}^d}\tilde{U}_0(µ_1(T)- µ_2(T))(x)m(dx)(µ_1(T) - µ_2(T))(dm)\\
& + \int_0^T \int_{\mptd}\int_{\mathbb{T}^d}\tilde{f}(µ_1(t)- µ_2(t))(x)m(dx)(µ_1(t) - µ_2(t))(dm)dt \leq 0.
\end{aligned}
$$
This somehow classical computation yields a uniqueness result which is analogous to the usual result of uniqueness for MFG Nash equilibria.
\begin{Theorem}\label{thm:uniqnash}
Assume that $f$ and $U_0$ are such that 
$$
\int_{\mptd}\int_{\mathbb{T}^d}\tilde{U}_0(µ_1 - µ_2)(x)m(dx)(µ_1 - µ_2)(dm)\leq 0 \Rightarrow \tilde{U_0}(µ_1) = \tilde{U_0}(µ_2), \forall µ_1,µ_2 \in \mpptd,
$$
\be\label{monbiz}
\int_{\mptd}\int_{\mathbb{T}^d}\tilde{f}(µ_1 - µ_2)(x)m(dx)(µ_1 - µ_2)(dm)\leq 0\Rightarrow \tilde{f}(µ_1) = \tilde{f}(µ_2), \forall µ_1,µ_2 \in \mpptd.
\ee
Then there is at most one solution $(u,µ)$ of \eqref{mfgblind}.
\end{Theorem}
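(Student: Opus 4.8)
The plan is to leverage the single scalar inequality that has just been derived, namely that any two solutions $(u_1,\mu_1)$ and $(u_2,\mu_2)$ satisfy
\be
\ba
&\int_{\mptd}\int_{\mathbb{T}^d}\tilde{U}_0(\mu_1(T)- \mu_2(T))(x)m(dx)(\mu_1(T) - \mu_2(T))(dm) \\
&+ \int_0^T \int_{\mptd}\int_{\mathbb{T}^d}\tilde{f}(\mu_1(t)- \mu_2(t))(x)m(dx)(\mu_1(t) - \mu_2(t))(dm)\,dt \leq 0,
\ea
\ee
and to show that the two hypotheses force each of the two terms to vanish. Write $Q_{U_0}(\mu_1,\mu_2)$ and $Q_f(\mu_1,\mu_2)$ for the two bilinear pairings above. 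The crucial first step is to observe that, thanks to the \emph{linearity} of $\tilde f$ and $\tilde U_0$, the stated hypotheses are in fact monotonicity-plus-strictness statements. Indeed, I would argue that $Q_{U_0}\geq 0$ and $Q_f\geq 0$ for every pair: were some pairing strictly negative it would a fortiori be $\leq 0$, so by hypothesis the corresponding values of $\tilde U_0$ (resp. $\tilde f$) at $\mu_1$ and $\mu_2$ would coincide; linearity then gives $\tilde U_0(\mu_1-\mu_2)=0$ (resp. $\tilde f(\mu_1-\mu_2)=0$), whence the pairing equals $0$, a contradiction. Thus both pairings are nonnegative, and they vanish exactly when the corresponding values agree.

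With nonnegativity established, the displayed inequality (sum of two nonnegative quantities $\leq 0$) forces the terminal pairing $Q_{U_0}(\mu_1(T),\mu_2(T))$ to be zero and the running pairing $Q_f(\mu_1(t),\mu_2(t))$ to be zero for almost every $t\in[0,T]$. Applying the hypotheses again, now with pairing $=0\leq 0$, I obtain $\tilde U_0(\mu_1(T))=\tilde U_0(\mu_2(T))$ and $\tilde f(\mu_1(t))=\tilde f(\mu_2(t))$ for a.e. $t$, as elements of $\mathcal{C}^{2,\alpha}(\mathbb{T}^d)$ and $\mathcal{C}^{\alpha}(\mathbb{T}^d)$ respectively. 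In other words, the two HJB equations of \eqref{mfgblind} solved by $u_1$ and $u_2$ have the same source term (as functions of $x$, for a.e. $t$) and the same terminal condition.

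It then remains to run the usual comparison argument. By uniqueness of the classical solution of the Hamilton–Jacobi–Bellman equation \eqref{HJB} with prescribed source and terminal data, $u_1=u_2$ on $[0,T]\times\mathbb{T}^d$; in particular the feedback drifts $b_i=-D_pH(\cdot,\nabla_x u_i)$ coincide. Consequently $\mu_1$ and $\mu_2$ both solve the continuity equation \eqref{contpp} with the same drift and the same initial datum $\mu_0$, so the uniqueness statement of Proposition \ref{weaksol} yields $\mu_1=\mu_2$, completing the proof of uniqueness of $(u,\mu)$.

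The step I expect to demand the most care is the first one, namely converting the two hypotheses into genuine nonnegativity (monotonicity) of the pairings $Q_{U_0}$ and $Q_f$: this is precisely where the linearity of $\tilde f$ and $\tilde U_0$ is indispensable, since without it the equality of values could not be translated into the vanishing of the associated pairing, and the single scalar inequality could not be split into two independent sign conditions. The final two steps—uniqueness for the HJB equation and for the continuity equation—are comparatively routine given the framework already set up, the latter being exactly Proposition \ref{weaksol}.
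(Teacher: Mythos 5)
Your proposal is correct and takes essentially the same route as the paper: the paper's proof of this theorem is precisely the Lasry--Lions cross-duality computation \eqref{computeblind} displayed just before the statement, which you take as your starting inequality. The steps you add---using linearity of $\tilde f$ and $\tilde U_0$ to upgrade the hypotheses to nonnegativity of each pairing, splitting the inequality into vanishing terminal and running terms, and concluding via uniqueness for the HJB equation and Proposition \ref{weaksol}---are exactly the completion the paper leaves implicit, and they are sound (modulo the routine remark that a.e.-in-$t$ equality of the source terms upgrades to everywhere by continuity of $t\mapsto\mu_i(t)$ and of $\tilde f$).
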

The next result states that these conditions are stronger than classical monotonicity for $f$ and $U_0$, and that there exist functions satisfying them. 
\begin{Prop}
\begin{itemize}
\item Any function $f$ which satisfy the requirements of the previous theorem is a monotone operator, in the sense of the duality between continuous functions and measures.
\item For any smooth function $\phi(t,x)$, if $f$ is defined by 
$$
f(t,x,m) = \phi(t,x)\int_{\mathbb{T}^d}\phi(t,y)m(dy),
$$
then it satisfies the assumption of the previous result.
\end{itemize}
\end{Prop}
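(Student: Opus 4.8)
The plan is to prove the two items independently; both come down to a short computation that exploits the linearity of $\tilde{f}$ in its measure argument together with the logical shape of the hypothesis of Theorem \ref{thm:uniqnash}.

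For the first item, I would specialize the hypothesis \eqref{monbiz} to Dirac masses. Given $m_1,m_2 \in \mptd$, apply the assumption with $\mu_1 = \delta_{m_1}$ and $\mu_2 = \delta_{m_2}$. Then $\tilde{f}(\delta_{m_1}-\delta_{m_2})(x) = f(m_1)(x) - f(m_2)(x)$ is independent of the integration variable, and the outer integration against $(\delta_{m_1}-\delta_{m_2})(dm)$ selects the values $m=m_1$ and $m=m_2$ for the measure carrying the inner $x$-integration. The left-hand side of \eqref{monbiz} thus collapses to the classical monotonicity integrand
\be
\int_{\mathbb{T}^d}\big(f(m_1)(x) - f(m_2)(x)\big)(m_1 - m_2)(dx).
\ee
I would then argue by contradiction: if this quantity were strictly negative for some pair $(m_1,m_2)$, it would in particular be $\leq 0$, so the hypothesis would force $\tilde{f}(\delta_{m_1}) = \tilde{f}(\delta_{m_2})$, i.e. $f(m_1)=f(m_2)$; but then the integrand above vanishes identically and the quantity equals $0$, a contradiction. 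Hence it is nonnegative for every pair, which is exactly $L^2$-monotonicity of $f$.

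For the second item, fix the time $t$ as a passive parameter and set $c_t(\mu) := \int_{\mptd}\big(\int_{\mathbb{T}^d}\phi(t,y)m(dy)\big)\mu(dm)$, a linear functional of $\mu$. With the product form of $f$ one has $\tilde{f}(\mu)(x) = \phi(t,x)\,c_t(\mu)$, so the scalar factor $c_t(\mu_1-\mu_2) = c_t(\mu_1)-c_t(\mu_2)$ pulls out of the double integral in \eqref{monbiz}. The residual integral $\int_{\mptd}\int_{\mathbb{T}^d}\phi(t,x)\,m(dx)\,(\mu_1-\mu_2)(dm)$ is, by definition, again $c_t(\mu_1)-c_t(\mu_2)$, so the left-hand side of \eqref{monbiz} equals the perfect square $(c_t(\mu_1)-c_t(\mu_2))^2$. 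The implication is then immediate: a square is $\leq 0$ only if it is $0$, which yields $c_t(\mu_1)=c_t(\mu_2)$ and hence $\tilde{f}(\mu_1) = \phi(t,\cdot)\,c_t(\mu_1) = \phi(t,\cdot)\,c_t(\mu_2) = \tilde{f}(\mu_2)$.

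I do not expect a genuine obstacle here; once the linearity of $\tilde{f}$ is used, the computation is elementary in both items. The only point requiring care is bookkeeping with the nested integrals over $\mptd$ and $\mathbb{T}^d$: in the first item one must check that the Dirac choice really produces the standard monotonicity integrand, i.e. that the outer masses select $m=m_1$ and $m=m_2$ as the measures against which $f(m_1)-f(m_2)$ is integrated; in the second item one must notice that the entire double integral collapses to a square precisely because the constant factor $c_t(\mu_1-\mu_2)$ factors out and the remaining integral reproduces it. The final write-up should amount to a few lines for each item.
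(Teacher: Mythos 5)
Your proof is correct and follows essentially the same route as the paper: the first item is obtained by specializing the hypothesis \eqref{monbiz} to Dirac masses $\mu_i = \delta_{m_i}$, and the second by observing that the product structure of $f$ collapses the double integral to the perfect square $\left(c_t(\mu_1)-c_t(\mu_2)\right)^2$. You merely spell out two steps the paper leaves implicit (the short contradiction turning the implication into nonnegativity, and the final deduction $\tilde{f}(\mu_1)=\tilde{f}(\mu_2)$ from the vanishing square), which is fine.
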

\begin{proof}
The first claim follows immediately from choosing $µ_1$ and $µ_2$ as Dirac masses.\\

The second one follows from the computation
$$
\begin{aligned}
\int_{\mptd}&\int_{\mathbb{T}^d}\int_{\mptd}f(x,m')(µ_1 - µ_2)(dm')m(dx)(µ_1 - µ_2)(dm)\\
&= \int_{\mptd}\int_{\mathbb{T}^d}\int_{\mptd} \phi(t,x) \int_{\mathbb{T}^d}\phi(t,y)m'(dy)(µ_1 - µ_2)(dm')m(dx)(µ_1 - µ_2)(dm)\\
&= \left(\int_{\mptd}\int_{\mathbb{T}^d}\phi(t,y)m'(dy)(µ_1 - µ_2)(dm')\right)^2.
\end{aligned}
$$
\end{proof}

If it clear that the previous example of existence of functions satisfying the requirements of Theorem \ref{thm:uniqnash} can be generalized, for instance by adding terms independent of $m$. It is also clear that this requirement is more restrictive than the monotonicity. Indeed, consider the following example which highlights the fact that the linearity of $f$ is very helpful to obtain the monotonicity of $\tilde{f}$.
\begin{Ex}
Assume that $d =1$ and that $f$ is given by 
$$
f(m)(x) = xg\left(\int_{\T}ym(dy)\right),
$$
for some continuous increasing function $g:[0,1] \to \R$. The function $f$ is monotone as for $m,m' \in \mptd$,
$$
\ba
\int_{\T}&f(m)(x) -f(m')(x) (m-m')(dx)\\
& = \left(g\left( \int_{\T} y m(dy)\right) - g\left(\int_{\T}ym'(dy)\right)\right)\left(\int_{\T}xm(dx) - \int_{\T}x m'(dx)\right)\\
& \geq 0,
\ea
$$
since $g$ is non decreasing. But, as soon as $g$ is not affine, $\tilde{f}$ does not verify \eqref{monbiz}. Indeed, if $g$ is not affine, then there exists $x,y \in \T$ such that $g(x) + g(y) \ne 2 g(\frac{x+y}{2})$. Without loss of generality we can assume that $g(x) + g(y) < 2 g(\frac{x+y}{2})$. Hence, we can consider $z \in (0,1)$ such that $z < \frac{x + y }{2}$ and  $g(x) + g(y) < 2 g(z)$. Compute now
$$
\ba
\int_{\mathcal{P}(\T)}&\int_{\mathbb{T}}\tilde{f}\left(\frac12 \delta_{\delta_x} + \frac12 \delta_{\delta_y} - \delta_{\delta_{z}}\right)(u)m(du)\left(\frac12 \delta_{\delta_x} + \frac12 \delta_{\delta_y} - \delta_{\delta_{z}}\right)(dm)\\
&=\left(\frac 12 (g(x) + g(y)) - g(z)\right)\left(\frac{x + y}{2} - z\right)\\
& < 0.
\ea
$$
\end{Ex}

Finally, let us insist on the fact that the systems \eqref{mfgblind} and \eqref{mfg} are very similar and that it seems unlikely that general uniqueness results can be obtained outside of the assumptions of Theorem \ref{thm:uniqnash} for \eqref{mfgblind}, in the same way as there are few uniqueness results for \eqref{mfg} outside of the usual monotone conditions. However, we do not claim to have answered fully this question and leave it open here.\\

Another quite simple, but rather important, information we can observe at the moment is that, if the indetermination on the initial distribution does not affect the payments, then the uniqueness argument on monotonicity is still valid. The next result is a more precise statement of this fact.

\begin{Prop}\label{rem:payments}
Let $f: \mptd \mapsto \mathcal C(\T^d)$, $\mathcal{A} := \{ µ \in \mpptd, \exists g, µ \text{ a.e. in } m, f(m) = g\}$. Then $\tilde f$ is monotone on $\mathcal A$.
\end{Prop}
\begin{proof}
Let $µ,\nu \in \mathcal A$, and take $m_1, m_2$ in respectively the supports of $µ$ and $\nu$. By monotonicity of $f$
\begin{equation}\label{eq:proofrem}
\int_{\T^d}f(m_1)(x)- f(m_2)(x)(m_1-m_2)(dx) \geq 0,
\end{equation}
and furthermore, by definition of $\mathcal A$, $\tilde f(µ) = f(m_1)$ and $\tilde f (\nu) = f(m_2)$. Hence, integrating \eqref{eq:proofrem} against $µ(dm_1)\otimes \nu(dm_2)$ yields the result after simple computations.
\end{proof}

\section{The case of observed payments}
The last remark of the previous section suggests to be interested in the case in which, at any time, the absence of knowledge on the distribution of players does not translate into an absence of knowledge on the payments of the players. Indeed if at any time the players know the cost $f$, even if they do not know exactly the distribution of players $m$, the cost $f$ "stays" monotone in some sense.

This section considers a case in which the players observe all the payments. By this we mean that at a time $t$, even if the players do not know exactly the actual distribution of players $m_t$, they know the cost $f(m_t) \in \mathcal{C}(\mathbb{T}^d)$ that it induces (on the whole state space). In this situation, the information of the costs is common to all the players and it can be thought of as a public information. This structure is very reminiscent of a common noise in MFG. That is why we are not going to try to characterize Nash equilibria of the game, but focus on trying to define a notion of value of the MFG, using the master equation.

This section is far from being a complete study of such models and its aim is more to introduce this problem. Some partial results are given. The rest of this section is organized as follows. After a formal description of the model and some reminders on the disintegration of measures, the evolution of the belief with information on the payments is presented. We then derive the associated master equation and present various partial results of uniqueness.\\

It is worth mentioning that, in a situation in which the cost function is injective, such a model is of no interest as the players learn instantly the distribution of players. However we argue that in several models (especially macro-economic ones such as in \citep{krusell1998income}), the cost function is far from being one to one and, for instance, depends only on a few moments of the distribution of players.

\subsection{The model}
The framework is the following. Because the players are going to update their belief on the distribution of players using the information they have on the payments, the belief $(µ_t)_{t\geq 0}$ can no longer be computed as a function of only the strategies of the players and the initial belief $µ_0$. Indeed, it will depend on the actual, unknown distribution of players through the information received through the payments. Even if $(µ_t)_{t \geq 0}$ is by no mean random here, it is convenient to use the standard probabilistic framework to understand the object at interest.\\

Even if the players do not know the initial distribution of players, there is an actual $m_0 \in \mptd$ which describes their initial distribution. Because the players have initially the belief $µ_0$, let us model $m_0$ as a $\mptd$ valued random variable whose law is $µ_0$. In particular we shall assume here that the belief is consistent with the actual distribution of players, i.e. that formally, the actual $m_0$ is in the support of $µ_0$. The evolution of the actual distribution of players is denoted $(m_t)_{t \geq 0}$. Because it is unknown to the players, it can also be modeled as a random process whose law is $(µ_t)_{t\geq 0}$. The players then observe at any time $t\geq 0$ the payments $f(m_t) : \mathbb{T^d} \to \mathbb{R}$ and update their belief accordingly by conditioning it on their observation. The next section explains how the belief is updated.\\

Moreover, it seems clear that the information process $(f(m_t))_{t \geq 0}$ plays the role of a common noise and that a deterministic approach using a forward-backward system such as \eqref{mfgblind} can no longer be sufficient to model equilibria of the MFG. Thus master equation approach to characterize a value is presented later on. We refer to \citep{cardaliaguet2019master} for more details on MFG master equations. A similar approach to the one of \citep{carmona2016common} (used to deal with common noise) seems to be usable. However we were not able to adapt these arguments to non smooth conditionings.

\subsection{Reminders on disintegration}
Since players are going to update their belief according to the new information they gain, some facts on the disintegration, or conditioning, of measures are recalled. If one were to compute an expectation of some random variable, given an a priori information, the proper object to use would be the conditional expectation. Even if, because players are risk neutral, expectations shall be taken, we believe the following attempt at describing the evolution of the belief is helpful to understand the situation. The process of obtaining the desired conditioning is called in the literature the disintegration of a measure.

Given $µ\in \mpptd$, a measurable set $E$ and a measurable function $\psi : \mptd \to E$, the disintegration of $µ$ along $\psi$ is a family $(µ_y)_{y \in F}$ of probability measures on $\mptd$, where $F =\psi(\mptd)$, such that 
\be\label{star}
\text{For any measurable set }A \subset \mptd, \quad µ(A) = \int_{\mptd}µ_{\psi(m)}(A)µ(dm).
\ee
\be\label{star2}
\text{For any }y\in F, \quad µ_y(\psi^{-1}(\{y\})) = 1.
\ee
Formally, $µ_y$ is the conditioning of $µ$ on the fact that the information $y$ has been received. Disintegrations could have been defined in more general settings, see section 452 of \citep{fremlin2000measure}. Their existence and uniqueness is an involved question. In this setting, the existence of a disintegration, and its uniqueness $\psi_{\#}µ$ almost everywhere hold, see III-70 in \citep{dellacherie-meyer}.

\subsection{Evolution of the belief}
This section describes the evolution of the belief of the players in the context of observed payments, given that the strategies of the players are given by a function $b : [0,T]\times \mathbb{T}^d \to \mathbb{R}^d$. First, recall that, because at any time the players observe the payments, the process $(µ_t)_{t \geq 0}$, which represents the common belief on the distribution of players, has to be valued in 
\be\label{defA}
\mathcal{A} := \{ µ \in \mpptd, \exists g, µ \text{ a.e. in } m, f(m) = g\}.
\ee 
Consider the total information the players have received up to the time $t\geq 0$, when the initial distribution of players is $m\in \mptd$. Denoting this information $\mathcal{F}(t,m)$, one finds
$$
\mathcal{F}(t,m) := (f(K_sm))_{s \in [0,t]}.
$$

The belief of the players evolves as a combination of the two "rules"
\begin{enumerate}
\item The process $(µ_t)_{t\geq 0}$ is weighting elements of $\mptd$ which are transported along the \textbf{same} Fokker-Planck equation. Indeed, the strategies of the players, hence the drift $b$ in \eqref{FP}, cannot depend on the different elements of $\mptd$ which are "weighted" by $µ$.
\item At any time, the belief $µ_t$ is disintegrated along the function $\mathcal{F}$ into $(µ_{\theta})_{\theta \in \mathcal{F}(\mptd)}$ and the belief $µ_{\theta'}$ which corresponds to the observed payments $\theta'$ becomes the new belief.
\end{enumerate}
Obviously the previous is quite formal and a more precise definition is presented below.  
 \begin{Prop}\label{proclearn}
 Given $\bar{µ} \in \mathcal{A}$, for $\bar{µ}$ almost every $m$, there exists a process $(µ^{m}_t)_{t \geq 0}$ which satisfies for any $t \geq 0$ 
 \begin{itemize}
 \item $µ^{m}_0 = \bar{µ}$.
 \item For any $t \geq 0$, there exists $\nu_t$ such that $µ^{m}_t = (K_t)_{\#}\nu_t.$ 
 \item For any $t \geq 0$, for $µ^{m}_t$ almost every $m'$, $f(m') = f(K_tm)$.
 \item For any measurables $X,A \subset \mptd$ such that for $\bar{µ}$ almost every $m' \in A$, $f(K_tm') = f(K_tm)$, $µ^{m}_t(K_t(A)\cap X) = µ^{m}_t(X)$.
\item For any $t\geq 0$, $\phi \in \mathcal{C}(\mptd)$, 
\be\label{star3}
\int_{\mptd}\int_{\mptd}\phi(m')µ^m_t(dm')\bar{µ}(dm) = \int_{\mptd}\phi(m')(K_t)_{\#}\bar{µ}(dm').
\ee
 \end{itemize}  \end{Prop}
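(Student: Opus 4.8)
The plan is to construct the process $(\mu^m_t)_{t\ge 0}$ directly from the disintegration theorem recalled above, applied to the information map $\mathcal{F}(t,\cdot)$, and then to read off each of the five properties as either a definition, the fiber-concentration identity \eqref{star2}, or the averaging identity \eqref{star}. The guiding idea is that $\mu^m_t$ should be the initial belief $\bar\mu$ conditioned on the payment history observed when the true initial distribution is $m$, and then transported by $K_t$.

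Concretely, for fixed $t\ge 0$ I would disintegrate $\bar\mu$ along the map $\mathcal{F}(t,\cdot):\mptd\to \mathcal{C}([0,t],\mathcal{C}(\mathbb{T}^d))$, $m\mapsto (f(K_sm))_{s\in[0,t]}$. This map is Borel measurable, since $f$ is continuous and $(K_s)_{s\ge 0}$ is the continuous Fokker--Planck semigroup (continuity of $s\mapsto K_sm$ being Proposition \ref{prop:cont}), and its range sits in a Polish, hence standard Borel, space, so that the existence and $\mathcal{F}(t,\cdot)_\#\bar\mu$-a.e. uniqueness result of \citep{simmons2012conditional} applies. Writing $(\bar\mu_\theta)_\theta$ for the resulting family, I would set
\be
\nu_t := \bar\mu_{\mathcal{F}(t,m)}, \qquad \mu^m_t := (K_t)_{\#}\nu_t,
\ee
defined for $\bar\mu$-almost every $m$. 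The nestedness of the information, $\mathcal{F}(s,\cdot)$ being a factor of $\mathcal{F}(t,\cdot)$ for $s\le t$, guarantees by the tower property that these fiberwise conditionings are compatible across times, so that $(\mu^m_t)_{t\ge0}$ is well defined as a genuine process with increasing information and not merely at each fixed time.

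Then I would check the five bullets in turn. The initial condition uses crucially that $\bar\mu\in\mathcal{A}$: at $t=0$ the observation $f(K_0m)=f(m)$ is $\bar\mu$-a.e. equal to the same $g$, so $\mathcal{F}(0,\cdot)$ is $\bar\mu$-a.e. constant, its disintegration is trivial, $\nu_0=\bar\mu$, and since $K_0=\mathrm{id}$ one gets $\mu^m_0=\bar\mu$. The pushforward structure is the definition of $\mu^m_t$. The consistency of the payments and the support property are restatements of \eqref{star2}: $\nu_t$ is concentrated on the fiber $\{m':\ f(K_sm')=f(K_sm),\ \forall s\le t\}$, which after transport by $K_t$ gives $f(m')=f(K_tm)$ for $\mu^m_t$-a.e. $m'$ and yields the disintegration/support identity of the fourth bullet. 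Finally, identity \eqref{star3} is exactly the averaging relation \eqref{star}: unfolding the pushforwards,
\be
\int_{\mptd}\int_{\mptd}\phi(m')\mu^m_t(dm')\,\bar\mu(dm)
=\int_{\mptd}\int_{\mptd}\phi(K_tm')\,\bar\mu_{\mathcal{F}(t,m)}(dm')\,\bar\mu(dm)
=\int_{\mptd}\phi(K_tm')\,\bar\mu(dm'),
\ee
the second equality being \eqref{star} applied to $m'\mapsto\phi(K_tm')$, and the right-hand side equals $\int_{\mptd}\phi(m')(K_t)_{\#}\bar\mu(dm')$ by the definition of the pushforward.

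The main obstacle I expect is not the verification of the listed identities, which is essentially bookkeeping on top of the disintegration theorem, but the measure-theoretic well-posedness of the family as a process: choosing the disintegrations $(\bar\mu_\theta)_\theta$ measurably in $(t,m)$ and compatibly across times, so that the a.e.-defined objects $\nu_t$ glue into one process defined on a common $\bar\mu$-full set of $m$. This requires the target of $\mathcal{F}(t,\cdot)$ to be sufficiently regular and a careful appeal to the a.e.-uniqueness in \citep{simmons2012conditional}; the non-smoothness of the conditioning is precisely the difficulty already flagged as an obstruction to adapting the common-noise arguments of \citep{carmona2016common}.
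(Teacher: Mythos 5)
Your construction coincides with the paper's own proof: for each $t$ you disintegrate $\bar\mu$ along $\psi=\mathcal{F}(t,\cdot)$, set $\nu_t:=\bar\mu_{\mathcal{F}(t,m)}$ and $\mu^m_t:=(K_t)_{\#}\nu_t$, and read off the five bullets from the definition together with the identities \eqref{star} and \eqref{star2}, citing \citep{simmons2012conditional} for existence and almost-everywhere uniqueness of the disintegration, exactly as the paper does. The extra measure-theoretic care you flag (measurability of $\mathcal{F}(t,\cdot)$, compatibility of the conditionings across times, gluing the a.e.-defined objects into one process on a common full set) goes slightly beyond what the paper verifies, which argues at each fixed $t$ and declares the remaining bullets immediate, so your write-up is the same route carried out a bit more scrupulously.
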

 \begin{Rem}
 This proposition is almost the definition of the evolution of the belief in this model. The first point states the initial condition, the second one that this evolution follows the first rule above, the third one states that the belief is indeed consistent with the information and the fourth point states that the process $(µ^{\mathcal{F}(m)}_t)_{t\geq 0}$ is not too restrictive (not simply $(\delta_{K_tm})_{t \geq 0}$ for instance), even though no uniqueness result is stated here.
 \end{Rem}
\begin{proof}
For $t\geq 0$ consider the disintegration of $µ$ along $\psi := \mathcal{F}(t,\cdot)$ and denote it by $(\tilde{\nu}_{\theta})_{\theta \in \psi(\mptd)}$. Defining $\nu_t := \tilde{\nu}_{\mathcal{F}(t,m)}$ and \be µ^{\mathcal{F}(m)}_t := (K_t)_{\#}\nu_t \ee proves the claim. Indeed, the first three points are immediate. The fourth one follows from \eqref{star2} and the last one comes from \eqref{star}. Note that the disintegration is uniquely defined $\psi_{\#}\bar{µ}$ almost everywhere from III-70 in \citep{dellacherie-meyer}.
\end{proof}

\subsection{An illustrative example}
The previous approach was only presented to formally derive the master equation associated to this problem, and only few results are available in this situation. Nonetheless, we now present an example in which we can describe the evolution of the belief more explicitly. 

Consider the case $d = 1$ and assume that the initial belief $µ_0$ is given by $p_1 \delta_{\delta_0} + p_2 \delta_{\delta_{\epsilon}}$ where $\epsilon \in (0,\frac 14)$ and $p_1,p_2 \geq 0, p_1 + p_2 = 1$. Consider now a function $f_0$ such that
$$
f_0(x) = \begin{cases} 0 \text{ if } x \notin (\frac 14,\frac 12 - \frac{1}{16})\\ -2 \text{ if } x \in (\frac14 + \frac{1}{16}, \frac 12 - \frac 18), \end{cases}
$$
and $f_0$ is decreasing in $[\frac 14,\frac{5}{16}]$ and increasing in $[\frac 38,  \frac{7}{16}]$. We assume moreover that $f_0$ is globally smooth. We define the cost $f$ with,
$$
\forall m \in \mathcal{P}(\T), x \in \T, f(m)(x) = f_0(x) + cf_0(x)\int_{\T}f_0(y)m(dy),
$$
for $c \in (0,1)$. Finally, we assume that the duration of the game is $T=2$, that there is no final cost and that the Hamiltonian is given by $H(x,p) = |p|$, which corresponds to the situation in which the players can choose any control they like in $[-1,1]$ for a cost of $0$. 

In this situation, no matter what $m$ actually is, the players always prefer to be in the interval $[\frac{5}{16},\frac{3}{8}]$, hence, for any time $t \in [0,T]$, any player in $x \in [0,\frac{5}{16}]$ will always choose a control equal to $+1$. Hence, the belief at time $t \leq \frac{5}{16}- \epsilon$ will be given by 
$$
µ_t = p_1\delta_{\delta_t} + p_2\delta_{\delta_{\epsilon + t}}.
$$
Moreover, for these times, the players have only received the information process constant equal to $f_0$. Hence there is no information to take into account. Then, at time $t =( \frac{5}{16} - \epsilon)+$, one of two situations happen, either the players continue to receive the information that the payment is given by $f_0$, and then the actual initial distribution of players was $\delta_0$, or they see a change of information and the initial distribution of players was given by $\delta_\epsilon$. In any case, the situation then reduces to a standard MFG (which admits a unique solution thanks to the monotonicity of the coupling here).

Of course, this example is quite simple, in particular, the control of the players do not depend much on the belief. However, we believe it is instructive to keep in mind. Furthermore, we insist on the fact that there is some room in this example to consider more general situations. In particular, we could make small perturbations of $f$ by functions depending more generally on the restriction of $m$ to $[\frac{5}{16},\frac{3}{8}]$.

\subsection{Derivation of the master equation}
The associated master equation is derived formally in this section. We start by a development on functions on $[0,T]\times \mpptd$ of the form  
$$
V(t,µ) := \mathbb{E}_{µ}\left[\int_t^T G(\nu^{m,µ}_s)ds\right],
$$
for smooth $G$ where $(\nu^{m,µ}_t)_{t \geq 0}$ is the process constructed in the proof of Proposition \ref{proclearn} when $\bar{µ} \leftarrow µ$ and where $\mathbb{E}_µ[G(m)] := \int_{\mptd}G(m)µ(dm).$
For any $dt > 0$, separating the previous integral leads to
\be\label{ddp}
V(t,µ) = \mathbb{E}_{µ}\left[\int_t^{t+dt} G(\nu^{m,µ}_s)ds + V(t+dt,\nu^{m,µ}_{t+dt})\right].
\ee
 If on one hand the evolution of $(\nu^{m,µ}_s)_{s \geq t}$ has no reason to be smooth, on the other hand, because the information is precisely through $f$, for $µ$ almost every $m$, $(f(\nu^{m,µ}_s))_{s \geq t}$ should be smooth. Hence if $G$ is given as 
 $$
 G(µ) := \int_{\mptd}\Psi(f(m))µ(dm)
 $$
 for a smooth function $\Psi$, then we expect that $G$ is sufficiently smooth so that for $µ$ almost every $m$
 $$
(dt)^{-1}\int_t^{t+dt} G(\nu^{m,µ}_s)ds \underset{dt \to 0}{\longrightarrow} G(µ).
 $$
 Thus, dividing by $dt$ and letting $dt \to 0$ in \eqref{ddp} leads, formally, to the PDE
 $$
 -\partial_t V - A[µ,b,f][V] = G(µ),
 $$
 where $A[µ,b,f][V]$ is the operator "defined" by
 \be\label{defA}
 A[µ,b,f][V] := \lim_{dt \to 0} \mathbb{E}_{µ}\left[ \frac{V(\nu^{m,µ}_{dt}) - V(µ)}{dt}   \right],
 \ee
where $(\nu^{m,µ}_t)_{t \geq 0}$ is the process given by Proposition \ref{proclearn}, when the starting belief is $µ$ and $m$ the initial, unknown, distribution of players.\\

Clearly, because the evolution of $(\nu^{m,µ}_t)_{t \geq 0}$ is neither smooth nor even well defined, the domain of definition of $A$ is not clear at all. We shall come back on this question later on.\\

We are now equipped to derive the master equation. Fixing the strategies of the "other" players through the function $b : [0,T]\times \mathbb{T}^d\to \mathbb{R}^d$, the value function of a player is
$$
\begin{aligned}
U&(t,x,µ) =\\
& \inf_{\alpha} \mathbb{E}_{\mathbb{P},µ}\left[\int_t^T\int_{\mptd}f(m')(X^{\alpha}_s)\nu^{m,µ}_s(dm') + L(X^{\alpha}_s,\alpha_s)ds + \int_{\mptd}U_0(m')(X^{\alpha}_T)\nu^{m,µ}_T(dm')\right],
\end{aligned}
$$
where the state $(X^{\alpha}_s)_{s \geq 0}$ evolves according to \eqref{sde} and the infimum is taken over all progressively measurable process (with respect to both the individual noise $(W_s)_{s \geq 0}$ and the common information). Hence, following the previous development, formally, if $U$ is smooth it is a solution of
$$
\begin{aligned}
-\partial_t U - \sigma \Delta U + H(x,\nabla_x U) - A[µ,b,f][U] = \tilde{f}(µ)(x) \text{ in } (0,T)\times \T^d\times \mathcal{A},\\
U(T,x,µ) = \tilde{U}_0(µ)(x) \text{ in } \T^d\times \mathcal{A}.
\end{aligned}
$$
Where $\tilde{f}$ and $\tilde{U}_0$ are defined as in the previous section. 

Replacing $b$ by what should be the optimal strategies of the players, and reversing time to lighten notations, one obtains the master equation 
\be\label{master:a}
\begin{aligned}
\partial_t U - \sigma \Delta U + H(x,\nabla_x U) - A[µ,-D_pH(\nabla_x U),f][U] = \tilde{f}(µ)(x) \text{ in } \R_+\times \T^d\times \mathcal{A},\\
U(0,x,µ) = \tilde{U_0}(µ)(x) \text{ in } \T^d\times \mathcal{A}.
\end{aligned}
\ee

\subsection{Mathematical analysis of the master equation}
This section contains a partial mathematical analysis of the master equation just derived. The notion of monotone solutions introduced in \citep{bertucci2021monotone,bertucci2021monotone2} is used to prove some properties of the value functions for such MFG.\\

Even if the precise nature of the operator $A$ is not established here, it possesses the following properties.
\begin{enumerate}
\item If there is no learning (e.g. if $f$ is constant), then $\nu^{m,µ}_t = (K_t)_{\#}µ$ for any $m$ and $A$ is defined on smooth functions on $\mathcal{A} = \mpptd$.
\item When evaluated on the minimum $µ^*$ of a function $V : \mpptd \to \mathbb{R}$, one should have $A[µ^*,b,f][V] \geq 0$ for any $b,f$.
\item If, for some smooth function $\phi$, $V$ is a function of the form
\be\label{funclinear}
V(µ) = \int_{\mptd}\phi(m)µ(dm),
\ee
then, whatever the function $f$,
 \be\label{propAlinear}
 A[µ,b,f][V] = \int_{\mptd} \int_{\T^d} \sigma \Delta_x\nabla_m\phi(m,x) + b (t,x)\cdot \nabla_x\nabla_m\phi(m,x)m(dx) µ(dm).
 \ee
\end{enumerate}
The first point is mainly a remark, the second one follows from \eqref{defA} and the third point is a consequence of \eqref{star3}. Indeed, it is a consequence of the fact that because $V$ is linear, $\mathbb{E}_{µ}[V(\nu^{m,µ}_{dt})] = V((K_t)_{\#}µ)$.

Although the description of the operator $A$ is quite poor at this time, the properties (2) and (3) above are sufficient to define a concept of monotone solutions here. 

\subsubsection{Monotone solutions of master equations} The main advantage of monotone solutions is that they allow to define solutions of \eqref{master:a} without using the operator $A$ directly on $U$ but on a large set of simpler functions instead, namely functions of the form \eqref{funclinear}. To define precisely these simpler functions, and to make the following more understandable, one needs to use a duality between $\mathcal{C}(\mathbb{T}^d)$ and $\mpptd$. As suggested by the computation done in the blind case, we choose the following duality between $\phi \in \mathcal{C}(\mathbb{T}^d)$ and $µ \in \mathcal{M}(\mptd)$:
$$
\langle \phi, µ \rangle := \int_{\mptd}\int_{\mathbb{T}^d}\phi(x)m(dx)µ(dm).
$$

The idea of monotone solutions consists in looking at minima of the function 
$$
W (t,µ) := \langle U(t,\cdot,µ) - \phi(\cdot),µ - \nu\rangle
$$ for $\phi \in \mathcal{C}^2(\mathbb{T}^d)$ and $\nu \in \mathcal{M}(\mptd)$. Formally, $W$ is a solution of
\be\label{master:a}
\begin{aligned}
\partial_t W - \sigma\langle \Delta U   + H(x,\nabla_x U) ,µ - \nu\rangle - A[µ,-D_pH(\nabla_x U),f][W]\\
 = \langle \tilde{f},µ-\nu\rangle - \left\langle \sigma\Delta (U - \phi) - D_pH(\nabla_xU)\cdot\nabla_x(U - \phi),µ\right\rangle.
\end{aligned}
\ee
Indeed, remark that 
$$
A[µ,-D_pH(\nabla_x U),f][W] = \langle A[µ,-D_pH(\nabla_x U),f][U], µ - \nu \rangle + A[µ,-D_pH(\nabla_x U),f][\Psi]
$$
where $\Psi : µ' \to \langle U(µ) - \phi,µ'\rangle$. The previous formula is just the equivalent of the formula $(fg)' = f'g + g'f$ for the operator $A$. Now using \eqref{propAlinear} we deduce that 
\be\label{eq43}
A[µ,-D_pH(\nabla_x U),f][\Psi] = \left\langle \sigma\Delta (U - \phi) - D_pH(\nabla_xU)\cdot\nabla_x(U - \phi),µ\right\rangle.
\ee

Using both the facts that on minima of $W$, $A[µ,-D_pH(\nabla_x U),f][W] \geq 0$, and \eqref{eq43}, we arrive at the

\begin{Def}\label{def:mon}
We say that a continuous function $U : [0,T]\times \mathbb{T}^d \times \mathcal{A}$, smooth in its second argument, is a value of the MFG with observed payments and unknown distribution of players if : 
 \begin{itemize} \item for any $\mathcal{C}^2$ function $\phi : \mathbb{T}^d \to \mathbb{R}$, for any measure $\nu \in \mathcal{M}(\mathcal{M}(\mathbb{T}^d))$, for any smooth function $\vartheta : [0,\infty) \to \mathbb{R}$ and any point $(t_0,µ_0) \in (0,T]\times\mathcal{A}$ of minimum of $(t,µ) \to \langle U(t,\cdot,µ)- \phi, µ- \nu \rangle - \vartheta(t)$ on $(0,t_0]\times\mathcal{A}$, the following holds
$$
\begin{aligned}
\frac{d \vartheta}{dt}(t_0)& +  \langle -\sigma \Delta U + H(\cdot,\nabla_x U), µ_0 - \nu \rangle \geq \langle \tilde{f}(\cdot,µ_0),µ_0 - \nu\rangle\\
&-  \left\langle \sigma\Delta (U - \phi) - D_pH(\nabla_xU)\cdot\nabla_x(U - \phi),µ_0\right\rangle.
\end{aligned}
$$
\item the initial condition holds
$$
U(0,x,µ) = \tilde{U}_0(x,µ) \text{ in } \T^d\times \mathcal{A}.
$$
\end{itemize}
\end{Def}

 \begin{Rem}
The previous definition only involves the information of the payments through the set $\mathcal{A}$, on which the value function is defined. Defining it on a larger set than $\mathcal{A}$ would be meaningless since such belief are not coherent with the model.
\end{Rem}
\subsubsection{A first result toward uniqueness of monotone solutions}
Ideally, following Remark \ref{rem:payments}, one could hope to establish a uniqueness result for value functions in the sense of Definition \ref{def:mon}. However because of the nature of the set $\mathcal{A}$, we have not been able to prove such a result in a general framework. The nature of the set $\mathcal{A}$ can be described with the following result.

\begin{Prop}
Assume $f: \mptd \mapsto \mathcal C(\T^d)$ is continuous. Then, the set $\mathcal{A}$ is compact (for the weak topology). As soon as $f$ is neither one-to-one nor constant, $\mathcal{A}$ is not convex.
\end{Prop}
\begin{proof}
Since $\mpptd$ is compact for the weak topology, if $\mathcal A$ is closed, then it is compact. Remark that $µ \in \mathcal A$ if and only if
$$
\int_{\mptd\times\mptd}\|f(m_1)-f(m_2)\|_\inftyµ(dm_1)\otimesµ(dm_2) = 0.
$$
Because the integrand in the previous expression is a continuous function of $m_1$ and $m_2$, we deduce that the right hand side of the previous expression is continuous in $µ$ for the weak topology. Hence, $\mathcal A$ is closed as the pre-image of $\{0\}$ by this map.\\

If $f$ is neither one-to-one or constant, take $m_1,m_2$ and $m_3$ such that $f(m_1) = f(m_2) \ne f(m_3)$ and remark that $\frac12 \delta_{m_1} + \frac12 \delta_{m_2}$ and $\delta_{m_3}$ belongs to $\mathcal{A}$ while none of their (strict) convex combination does.
\end{proof}
\begin{Rem}
Rigorously, $\mathcal{A}$ is not convex even if $f$ is one-to-one. However in this case, it is isomorphic to $\mptd$ which is convex. I is not clear that this isomorphism preserves convexity, however, this remark is still helpful and I shall come back on this later on.
\end{Rem}

Even if we were not able to establish a general result of uniqueness, we could prove the following, which is commented immediately afterwards.

 \begin{Theorem}\label{thm:uniq}
Assume $f:\mptd \mapsto \mathcal C(\T^d)$ is continuous. If $f$ and $U_0$ are monotone, then two value functions $U$ and $V$ of the MFG, in the sense of Definition \ref{def:mon}, are such that for any $t\geq 0, µ \in \mathcal{A}, m,m' \in \mptd$ such that $f(m) = f(m') = \tilde{f}(µ)$
 $$
 \int_{\mathbb{T}^d}U(t,x,µ) - V(t,x,µ)(m-m')(dx) = 0.
 $$
Moreover, for any $t \geq 0, m \in \mptd$, $\nabla_x U(t,\cdot,\delta_m) = \nabla_xV(t,\cdot,\delta_m)$. 
 \end{Theorem}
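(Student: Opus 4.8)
The plan is to run the doubling-of-variables comparison that underlies uniqueness of monotone solutions, organized so that the operator $A$—which is controlled only on linear functionals, through \eqref{propAlinear}—never acts on $U$ or $V$ directly. After the time reversal already built into \eqref{master:a}, I would introduce the comparison functional
\be
\Xi(t,\mu,\nu) := \langle U(t,\cdot,\mu) - V(t,\cdot,\nu),\mu-\nu\rangle, \qquad (t,\mu,\nu)\in[0,T]\times\mathcal{A}\times\mathcal{A},
\ee
which vanishes on the diagonal $\mu=\nu$. Both announced identities are first order information about $\Xi$ at the diagonal, so the core of the proof is the sign statement $\Xi\geq0$. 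To obtain it I would argue by contradiction: assuming $\Xi$ is somewhere negative, add the usual penalization in $t$ together with a small coercive term producing a strict minimizer $(t_0,\mu_0,\nu_0)$, and apply Definition \ref{def:mon} twice at this point—once to $U$ in the variables $(t,\mu)$ with frozen test function $\phi=V(t_0,\cdot,\nu_0)$, test measure $\nu_0$ and a function $\vartheta$ absorbing the time dependence of $V$, and symmetrically to $V$ in $(t,\nu)$ with $\phi=U(t_0,\cdot,\mu_0)$ and test measure $\mu_0$. This is exactly where monotone solutions pay off: each inequality evaluates $A$ only on the linear functional $\Psi$, for which \eqref{eq43} provides a closed expression.

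Adding the two inequalities, and writing $w:=U(t_0,\cdot,\mu_0)-V(t_0,\cdot,\nu_0)$, I expect the second order contributions $\langle\sigma\Delta w,\mu_0-\nu_0\rangle$ to cancel exactly, leaving the schematic remainder
\be
(\vartheta'+\tilde\vartheta')(t_0)\geq \langle\tilde f(\mu_0)-\tilde f(\nu_0),\mu_0-\nu_0\rangle + C_1 + C_2,
\ee
with $C_1=\langle D_pH(\nabla U)\cdot\nabla w - H(\nabla U)+H(\nabla V),\mu_0\rangle\geq0$ and $C_2=\langle H(\nabla U)-H(\nabla V)-D_pH(\nabla V)\cdot\nabla w,\nu_0\rangle\geq0$ nonnegative by convexity of $H$, and with $\langle\tilde f(\mu_0)-\tilde f(\nu_0),\mu_0-\nu_0\rangle\geq0$ because monotonicity of $f$ transfers to $\tilde f$ on $\mathcal{A}$: since $f$ equals $\tilde f(\mu_0)$ for $\mu_0$-almost every $m$ and $\tilde f(\nu_0)$ for $\nu_0$-almost every $m$, this pairing reduces to $\int_{\T^d}(\tilde f(\mu_0)-\tilde f(\nu_0))\,d(\bar m_{\mu_0}-\bar m_{\nu_0})$, which is nonnegative by the $L^2$ monotonicity of $f$. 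As the penalization forces $(\vartheta'+\tilde\vartheta')(t_0)\leq0$ at the minimum, this contradicts $\Xi(t_0,\mu_0,\nu_0)<0$, so $\Xi\geq0$.

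Once $\Xi\geq0$ is available, exchanging the roles of $U$ and $V$ yields the opposite one-sided bound, so on the diagonal $\langle(U-V)(t,\cdot,\mu),\mu-\nu\rangle=0$ for every admissible direction $\nu$; taking $\nu=\delta_m$ and $\nu=\delta_{m'}$, which stay in $\mathcal{A}$ precisely because $f(m)=f(m')=\tilde f(\mu)$, and subtracting gives the first identity. For the gradient statement I would specialize the previous computation to $\mu_0=\nu_0=\delta_m$, where $\Xi\equiv0$ forces $C_1+C_2=0$; strict convexity of $H$ then makes the convexity defects vanish, giving $\nabla_xU=\nabla_xV$ on the support of $m$, which propagates to all of $\T^d$ through the Hamilton--Jacobi equation since its right hand side and terminal data there coincide.

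The main obstacle is not the algebra above but the interplay between the non-convexity of $\mathcal{A}$ and the weakness of the monotonicity hypotheses. The terminal datum only provides $\langle\tilde U_0(\mu)-\tilde U_0(\nu),\mu-\nu\rangle\geq0$ when $\mu-\nu$ is a difference of Diracs supported in a common fiber of $f$—the two-point monotonicity of $U_0$—whereas $\tilde U_0$ is genuinely not monotone on all of $\mathcal{A}$; together with the constraint that admissible perturbations of a belief must remain in the non-convex set $\mathcal{A}$, this is exactly what confines the comparison to Dirac directions inside a fiber and explains why only the partial conclusion of the theorem, rather than $U=V$, can be reached. Making the doubling argument rigorous while respecting these constraints, and the merely formal status of $A$, is the delicate part.
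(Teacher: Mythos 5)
Your core comparison argument is essentially the paper's: the same doubled, penalized functional $W(t,s,\mu,\nu)=\langle U(t,\mu)-V(s,\nu),\mu-\nu\rangle+\alpha(t-s)^2+\delta(t+s)$, two applications of Definition \ref{def:mon} at the minimizer with each other's value as test function $\phi$ (so that $A$ only ever acts through \eqref{eq43}), cancellation of the $\sigma\Delta$ terms, convexity of $H$, and the observation that on $\mathcal{A}$ the $L^2$-monotonicity of $f$ yields $\langle \tilde f(\mu_*)-\tilde f(\nu_*),\mu_*-\nu_*\rangle\geq 0$ because $f$ is a.e. constant under any belief in $\mathcal{A}$. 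Your extraction of the first identity is also the paper's, provided you make ``taking $\nu=\delta_m$'' precise: the correct move is $\mu=(1-\theta)\bar\mu+\theta\delta_{m_1}$, $\nu=(1-\theta)\bar\mu+\theta\delta_{m_2}$, which stay in $\mathcal{A}$ exactly because $f(m_1)=f(m_2)=\tilde f(\bar\mu)$, then divide by $\theta$ and let $\theta\to 0$; pairing $U(t,\mu)-V(t,\delta_m)$ directly against $\mu-\delta_m$ does not linearize to the claimed identity. The genuine gap in your contradiction argument is the boundary case: Definition \ref{def:mon} is only usable at interior times, and you never treat a minimizer with $t_0=0$. The paper does: boundedness of $\alpha(t_*-s_*)^2$ lets one push $s_*$ toward $0$ by taking $\alpha$ large, continuity gives $\|V(s_*,\cdot,\cdot)-U_0\|_\infty\leq c_0/3$, and the monotonicity of $U_0$ in the doubled pairing then produces the contradiction $-c_0/3\leq -c_0/2$. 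Your remark that $L^2$-monotonicity of $U_0$ does not transfer to $\tilde U_0$ on all of $\mathcal{A}$ (since, unlike $f$, $U_0$ need not be a.e. constant under a belief in $\mathcal{A}$) is a fair criticism of this hypothesis, but it cannot simply be waved at: without closing the $t_0=0$ case the sign $\Xi\geq 0$ is not established and neither conclusion follows. Your diagnosis is also off: the partial character of the theorem stems from the non-convexity of $\mathcal{A}$, which confines admissible perturbations of $\bar\mu$ to Dirac mixtures within the fiber $f^{-1}(\{\tilde f(\bar\mu)\})$, not from the terminal-datum issue.

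Your route to the gradient statement is genuinely different from the paper's and would fail. Specializing to $\mu_0=\nu_0=\delta_m$ and forcing the convexity defects $C_1+C_2$ to vanish requires strict convexity of $H$, which the theorem does not assume; even granting it, you only get $\nabla_x U=\nabla_x V$ $m$-almost everywhere, and when $m$ is itself concentrated (say a Dirac mass on $\T^d$) the support is a single point, while the proposed ``propagation through the Hamilton--Jacobi equation'' is unavailable: $U$ and $V$ are monotone solutions and are not known to satisfy the equation pointwise, so no unique-continuation argument applies. The paper sidesteps all of this by exploiting that \emph{every} Dirac belief lies in $\mathcal{A}$, so Dirac beliefs can be perturbed in arbitrary directions: taking $\bar\mu=\delta_m$, $\mu=\delta_{(1-\theta)m+\theta m_1}$, $\nu=\delta_{(1-\theta)m+\theta m_2}$ with $m_1,m_2\in\mptd$ arbitrary, the inequality $\Xi\geq 0$ divided by $\theta$ gives, in the limit $\theta\to 0$, $\int_{\T^d}\bigl(U(t,x,\delta_m)-V(t,x,\delta_m)\bigr)(m_1-m_2)(dx)=0$ for all $m_1,m_2$, hence $U-V$ is constant in $x$ at $\delta_m$ and the gradients coincide, with no strict convexity and no propagation argument. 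You should replace your strict-convexity step by this perturbation argument.
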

 \begin{Rem}
 The last part of the Theorem only states that when the players know the distribution of players, only one profile of strategies is possible for the players, which is the one in the case with full information. The equality can be improved to equality of the value on Dirac masses following \citep{bertucci2021monotone2} under stronger assumptions on the monotonicity of $f$. The first part of the result is a bit less classical. It allows to characterize a set to which $U-V$ is orthogonal, at any point $t,µ$. Remark that the larger is $f^{-1}(\{ \tilde{f}(µ)\})$, the more information we have on the difference. However this set is in general not sufficiently large to deduce, in a simple way, uniqueness results for value functions of the MFG.
 \end{Rem}
 \begin{proof}
Assume that there exists $t \geq 0, µ, \nu \in \mathcal{A}$ such that 
$$
\langle U(t,µ) - V(t,\nu), µ - \nu \rangle = - c_0 < 0.
$$
Then, there exists $\delta > 0$ such that for any $\alpha$, the function $W$ defined by 
 $$
 W(t,s,µ,\nu) = \langle U(t,µ) - V(s,\nu), µ- \nu \rangle + \alpha(t-s)^2 + \delta(t + s)
 $$
 is not non-negative, with a minimum lower than $-\frac12 c_0$. Since $\mathcal{A}$ is compact and $W$ continuous, consider a minimum of $W$ on $[0,T]^2\times \mathcal{A}^2$, denoted by $(t_*,s_*,µ_*,\nu_*)$.
 
 Assume first that $t_*,s_* > 0$. Using the fact that $U$ is a value function of the MFG, we obtain 
 $$
 \begin{aligned}
 -&\delta - 2 \alpha(t-s) +  \langle -\sigma \Delta U(t_*,µ_*) + H(\cdot,\nabla_x U), µ_* - \nu_* \rangle \geq \langle \tilde{f}(\cdot,µ_*),µ_* - \nu_*\rangle\\
&-  \left\langle \sigma\Delta (U(t_*,µ_*) - V(s_*,\nu_*)) - D_pH(\nabla_xU)\cdot\nabla_x(U - V),µ_*\right\rangle.
\end{aligned}
 $$
 The analogous relation for $V$ is
  $$
 \begin{aligned}
 -&\delta - 2 \alpha(s-t) +  \langle -\sigma \Delta V(s_*,\nu_*) + H(\cdot,\nabla_x V), \nu_* - \mu_* \rangle \geq \langle \tilde{f}(\cdot,\nu_*),\nu_* - \mu_*\rangle\\
&-  \left\langle \sigma\Delta (V(s_*,\nu_*) - U(t_*,\mu_*)) - D_pH(\nabla_xV)\cdot\nabla_x(V - U),\nu_*\right\rangle.
\end{aligned}
 $$
Summing the two previous inequality, cancelling the terms in $\sigma$ and $\alpha$ yields
 $$
 \begin{aligned}
 -2 \delta &+  \langle H(\cdot,\nabla_x U), µ_* - \nu_* \rangle + \langle  H(\cdot,\nabla_x V), \nu_* - µ_* \rangle \geq\\
 & \langle \tilde{f}(\cdot,µ_*) - \tilde{f}(\cdot,\nu_*),µ_* - \nu_*\rangle -  \left\langle D_pH(\nabla_xU)\cdot\nabla_x(U - V),µ_*\right\rangle\\
 & - \langle D_pH(\nabla_xV)\cdot\nabla_x(V - U),\nu_*\rangle.
 \end{aligned}
 $$
From the convexity of the Hamiltonian, we deduce that
$$
\langle H(\cdot,\nabla_x V) - H(\cdot,\nabla_x U) - D_pH(\cdot,\nabla_x U)\cdot\nabla_x(V-U),µ_*\rangle \geq 0.
$$
Using the analogous relation by exchanging $U$ and $V$ and replacing $µ_*$ by $\nu_*$, we obtain
$$
-2 \delta \geq  \langle \tilde{f}(\cdot,µ_*) - \tilde{f}(\cdot,\nu_*),µ_* - \nu_*\rangle.
$$
Using the monotonicity of $f$, we finally arrive at 
$$
-2\delta \geq 0
$$ which is a contradiction. Hence, we have that either $t_* = 0$ or $s_*= 0$. Without loss of generality, assume that $t_* = 0$. Let us remark that we necessary have that $\alpha(t_* - s_*)^2$ is bounded, hence, taking $\alpha$ as big as we want, we obtain that $s_*$ can be as close as $0$ as we like. Consider now $\alpha$ sufficiently big so that $\|V(s_*,\cdot,\cdot) - U_0(\cdot,\cdot)\|_{\infty} \leq \frac{c_0}{3}$. Evaluating $W$ at its minimum yields
$$
\langle U_0(\cdot,µ_*) - U_0(\cdot,\nu_*),µ_*- \nu_*\rangle + \alpha (s_*)^2 + \delta s_* + \langle U_0(\cdot,\nu_*) - V(s_*,\cdot,\nu_*),µ_*-\nu_*\rangle \leq -\frac{c_0}{2}.
$$
Since $U_0$ is monotone, we deduce that
$$
-\frac{c_0}{3} \leq -\frac{c_0}{2}.
$$
Thus, we also obtain a contradiction in this case.\\
 
 Hence we deduce that for every $µ,\nu \in \mathcal{A}, t \geq 0$, 
 \be\label{wpositive}
 \langle U(t,µ)-V(t,\nu), µ-\nu \rangle \geq 0.
 \ee
 We now explain how this information translates into the required result. Consider $\bar{µ} \in \mathcal{A}$, $\theta \in(0,1)$ and two measures $m_1,m_2 \in \mptd$ such that $f(m_1) = f(m_2) = \tilde{f}(\bar{µ})$. Using \eqref{wpositive} for $µ = (1- \theta)\bar{µ} + \theta \delta_{m_1}$ and $\nu = (1-\theta)\bar{µ} + \theta \delta_{m_2}$, we obtain that 
 $$
\theta \langle U(t,µ) - V(t,\nu),\delta_{m_1} - \delta_{m_2}\rangle \geq 0.
 $$
 Dividing by $\theta$ and letting $\theta \to 0$ yields 
 $$
 \int_{\mathbb{T}^d}U(t,x,\bar{µ}) - V(t,x,\bar{µ}) (m_1 - m_2)(dx) \geq 0.
 $$
 By symmetry of $m_1$ and $m_2$, we deduce that
  $$
 \int_{\mathbb{T}^d}U(t,x,\bar{µ}) - V(t,x,\bar{µ}) (m_1 - m_2)(dx) = 0.
 $$
 Consider now $m \in \mptd$. Taking $m_1,m_2 \in \mptd$, $\theta \in (0,1)$, and defining $\bar{µ} = \delta_m$, $µ = \delta_{(1- \theta)m + \theta m_1}$ and $\nu = \delta_{(1-\theta)m + \theta m_2}$, we obtain by proceeding as immediately above, 
$$
 \int_{\mathbb{T}^d}U(t,x,\bar{µ}) - V(t,x,\bar{µ}) (m_1 - m_2)(dx) = 0,
 $$
 which proves the second part of the claim.
 
 \end{proof}
 \begin{Rem}
 We hope the computation at the end of the previous result hint at why the set $\mathcal A$ can be thought of as convex in the case in which $f$ is one-to-one.
 \end{Rem}
 
 In the case of a dependence through the first moment only, the previous in fact yields uniqueness as the next result shows.
 
 \begin{Prop}
 Under the assumptions of the previous result, if $f$ only depends on $m$ through its first moment $E(m) := \int_{\T^d} x m(dx)$, then for any two monotone solutions $U$ and $V$, there exists $a : \R_+\times \mathcal{A} \to \R$ such that for any $t,x,µ$ 
 $$
 U(t,x,µ) = V(t,x,µ) + a(t,µ).
 $$
 \end{Prop}
 Note that in particular, two such value functions induce the same strategies for the players. Hence, this result can be seen as a uniqueness result on the underlying equilibria of the game. Indeed, equilibrium strategies can be computed through the solution $U$ of the master equation and only depend on $\nabla_x U$. 
 \begin{proof}
 Recalling the proof of Theorem \ref{thm:uniq}, we know that for all $µ,\nu \in \mathcal{A}, t \in \R_+$,
 $$
 \langle U(t,µ) - V(t,\nu), µ - \nu \rangle \geq 0.
 $$
 Now take $µ \in \mathcal{A}$, consider $K \in \R^d$ such that almost everywhere for $µ$, $E(m) = K$. Consider any $m' \in \mptd_K:= \{m \in \mptd| E(m) = K\}$. From the previous inequality, we obtain that for any $\theta \in (0,1)$
 $$
  \theta\langle U(t,µ) - V(t,(1-\theta)µ + \theta \delta_{m'}),  µ - \delta_{m'} \rangle \geq 0.
 $$
 Hence, dividing by $\theta$ and taking the limit $\theta \to 0$, we deduce thanks to the continuity of $V$ that
 $$
 \langle U(t,µ) - V(t,µ),  µ - \delta_{m'} \rangle \geq 0.
 $$
 By symmetry, we in fact have equality in the previous relation. In particular, we obtain that there exists $a(t,µ)$ such that for any $m \in \mptd_K$,
 \be\label{eq:hj}
 \langle U(t,µ) - V(t,µ), m\rangle = a(t,µ).
 \ee
 Up to a subtracting $a$ to $U$, we argue for the moment as if $a \equiv 0$. We now argue that $h =U(t,µ)-V(t,µ)$. Indeed, observe for instance that $h(K) = 0$ by evaluating \eqref{eq:hj} on $m = \delta_K$. Now, for $x,y$ such that $\frac{x+y}{2} = K$, evaluating \eqref{eq:hj} on $\frac12 \delta_x + \frac12 \delta_y$, we deduce that $h(x) = -h(y)$. Arguing similarly $\frac13(\delta_x +\delta_y + \delta_{K -(x+y)})$, and we then obtain
 $$
 h(K + x+y) = h(K +x) + h(K+y).
 $$
 Since $h$ is continuous, this implies that $h(K+\cdot)$ is linear, but because it is an element of $\mathcal C(\T^d)$, we deduce it is in fact equal to $0$, which proves the claim.
 \end{proof}

More generally, without using the particular structure of the torus with respect to the first moment, we have the following.

 \begin{Prop}
 Let $\psi: \T^d \mapsto \R$ be a given continuous function. Under the assumptions of the previous result, if $f$ only depends on $m$ through its $\psi$-moment $E_\psi(m) := \int_{\T^d} \psi(x) m(dx)$, then for any two monotone solutions $U$ and $V$, there exists $a,b : \R_+\times \mathcal{A} \to \R$ such that for any $t,x,µ$ 
 $$
 U(t,x,µ) = V(t,x,µ) + a(t,µ) + b(t,µ)\psi(x).
 $$
 \end{Prop}
The proof is a simple non-linear adaptation of the previous one that we do not detail here.

\subsection{Comments and future perspectives}
 As already mentioned above, this study on MFG with unknown distribution of players and observed payments does not cover all questions surrounding the problem, and hopefully, more results are to come. The approach proposed here used the notion of monotone solutions of MFG master equations to obtain a definition of solutions (Definition \ref{def:mon}). If the lack of a general uniqueness result pleads against this notion of solution, this definition is nonetheless helpful to prove several properties of such value functions and a uniqueness result for a particular case. It is possible that a more restrictive notion of solution will prove to be better adapted to this problem.
 
 In the study of this problem, a fundamental question which remains open is the question of the existence of such a value function. Because the effect of the observation of the payments possesses some similarity with the presence of a common noise in MFG, some approaches to prove existence are suggested from the literature on MFG with common noise, maybe the most natural would be the one of \citep{carmona2016common}. As already mentioned above, a direct application of this approach does not seem feasible. However, if we restrict ourselves to beliefs which are combinations of Dirac masses, such a strategy looks viable. It will then suffice to have a uniform estimate on the continuity of $U$ with respect to $µ$ to pass to the limit. This approach is not presented here because we were not able to establish such a continuity estimate.
 
 \section*{Acknowledgments}
 The author is grateful to Pierre-Louis Lions and Sylvain Sorin for numerous (independent) discussions that lead to this project. The author also acknowledge a partial support from the Lagrange Mathematics and Computing Research Center and the chair FDD (Institut Louis Bachelier).
 
\bibliographystyle{plainnat}
\bibliography{bibremarks}

\end{document}